\theoremstyle{plain}
\author[K. DIARRA]{Karamoko DIARRA}
\address{IRMAR, Unit\'e Mixte de Recherche 6625 du CNRS, Universit\'e de Rennes 1, Campus de Beaulieu, 35042 Rennes Cedex, France \\ DER de Math\'ematiques et d'informatique, FAST, Universit\'e de Bamako, BP : E $3206$ Mali.}
\email{diarak2005@yahoo.fr}
 \email{karamoko.diarra@univ-rennes1.fr}
\title[Syst\`eme de Garnier]{Construction et classification de certaines solutions alg\'ebriques des syst\`emes de Garnier}
\begin{document}
\frontmatter



\begin{abstract}

Dans cet article, nous classifions toutes les solutions alg\'ebriques (compl\`etes) non \'el\'ementaires des syst\`emes de Garnier  construites par la m\'ethode de Kitaev : elles se d\'eduisent des d\'eformations isomonodromiques donn\'ees
en tirant en arri\`ere une \'equation fuchsienne donn\'ee $E$ par une famille de rev\^etements ramifi\'es $\Phi_t$. Nous introduisons tout d'abord les structures orbifoldes associ\'ees et sous-jacentes \`a une \'equation fuchsienne. Ceci nous permet d'avoir une version raffin\'ee de la formule de Riemann Hurwitz qui nous permet rapidement de montrer que $E$ doit \^etre hyperg\'eom\'etrique. Ensuite, on arrive \`a borner le degr\'e de $\phi$ et les exposants, puis enfin \`a lister tous les cas possibles. Ceci g\'en\'eralise un r\'esultat d\^u \`a C. Doran dans le cas de l'\'equation de Painlev\'e VI. Nous construisons explicitement une de ces solutions.
\end{abstract}
\subjclass{34M55, 34M56, 34M03}
\keywords{\'Equations diff\'erentielles ordinaires, D\'eformations isomonodromiques, Familles de Hurwitz}
\altkeywords{\'Equations diff\'erentielles ordinaires, D\'eformations isomonodromiques, Familles de Hurwitz}
\maketitle
\mainmatter

{\center \section{Introduction}}
 Rappelons qu'un syst\`eme de Garnier de rang $N$ est un syst\`eme d'\'equations diff\'erentielles d'ordre $2$ non lin\'eaire de rang $N$. D'apr\`es un th\'eor\`eme de Garnier \cite{Garnier1}, il est obtenu par la d\'eformation isomonodromique de l'\'equation diff\'erentielle fuchsienne $E$ sur $\Bbb P^1$ avec $2N+3$ p\^oles ($N+3$ points singuliers non apparents et $N$ singularit\'es apparentes), qui s'\'ecrit sous la forme normale  $\frac{d^2u}{dx^2}+f(x)\frac{du}{dx}+g(x)u=0$ avec$$\left\{ \begin{array}{ll}
f(x)=&\frac{1-\theta_0}{x}+\frac{1-\theta_1}{x-1} +\sum_{i=1}^N\frac{1-\theta_{t_i}}{x-t_i} +\sum_{i=1}^N\frac{-1}{x-q_i} \\
g(x)=&\frac{a}{x}+\frac{b}{x-1} -\sum_{i=1}^N\frac{H_i}{x-t_i} +\sum_{i=1}^N\frac{P_i}{x-q_i}
\end{array}\right..$$
Les solutions des syst\`emes de Garnier sont en g\'en\'eral tr\`es transcendantes, mais ils peuvent avoir certaines solutions alg\'ebriques. Dans ce papier, on veut classer et construire explicitement certaines de ces solutions alg\'ebriques de Garnier. Si on commence par le cas le plus simple lorsque le rang $N=1$, le syst\`eme de Garnier se r\'eduit \`a l'\'equation diff\'erentielle de Painlev\'e VI. 
 On ne peut pas exprimer la solution g\'en\'erale de Painlev\'e VI \`a l'aide de solutions d'\'equations lin\'eaires (ou m\^eme non lin\'eaires du premier ordre) et d'op\'erations alg\'ebriques : Watanabe (dans \cite{Watanabe}) montre que toute solution est ainsi transcendante except\'e les solutions de type Riccati, et les solutions alg\'ebriques. Les premi\`eres apparaissent lorsque l'on consid\`ere les d\'eformations isomonodromiques d'\'equations fuchsiennes r\'eductibles (i.e. \`a monodromie triangulaire) : les coefficients de l'\'equation sont peu transcendants et satisfont une \'equation de Riccati. Watanabe \cite{Watanabe} les a classifi\'e, elles apparaissent pour des choix particuliers de valeurs propres. Les solutions alg\'ebriques apparaissent soit en familles, soit de mani\`ere sporadique et, si on en connaissait depuis Picard, la classification compl\`ete vient seulement d'\^etre achev\'ee par Lisovyy et Tykhyy \cite{Lisovyy}.
Pour \^etre plus exact, elle r\'esulte de constructions de Picard, Hitchin (\cite{Hitchin}, \cite{Hitchin1}), Dubrovin, Mazzocco (\cite{Dubr}, \cite{Kapov}), Kitaev (\cite{Belyi}, \cite{Ki}, \cite{K2}) et surtout Boalch (\cite{P2}, \cite{P1}) qui a largement contribu\'e \`a compl\'eter la liste qui s'est ensuite av\'er\'ee d\'efinitive : O. Lisovyy et Y. Tykhyy ont montr\'e qu'il n'y en avait pas d'autres. On trouve (modulo sym\'etries de Painlev\'e VI)
\begin{itemize}
\item $3$ familles \`a un ou deux param\`etres complexes, 
\item une famille \`a param\`etre discret (points de torsion sur une courbe elliptique),
\item $52$ solutions sporadiques.
\end{itemize}
Bien que la motivation initiale de Painlev\'e \'etait de trouver de nouvelles transcendantes, les solutions alg\'ebriques ont elles aussi un int\'er\^et. Elles permettent chez Doran \cite{Doran} de construire des fibrations elliptiques avec connexion de Gauss-Manin explicite, chez Boalch \cite{P} de construire des solutions explicites au probl\`eme de Riemann-Hilbert, de construire des \'equations uniformisantes explicites pour certaines orbifoldes arithm\'etiques. D'apr\`es un travail de Dubrovin et Mazzocco \cite{Kapov}, la transcendance de la solution g\'en\'erale des syst\`emes de Garnier augmente avec $N$ en ce sens qu'on ne peut int\'egrer le syst\`eme de rang $N+1$ avec les solutions du syst\`eme de rang $N$.
Pour autant, il y aura encore des solutions sp\'eciales, moins transcendantes qu'attendues, en consid\'erant par exemple les d\'eformations d'\'equations fuchsiennes r\'eductibles, ou encore \`a monodromie finie. Les premi\`eres, \'etudi\'ees par exemple dans le livre de Iwasaki, Kimura, Shimomura et Yoshida \cite{Isawaki}, se ram\`enent \`a des solutions d'\'equations  diff\'erentielles lin\'eaires. Les secondes vont produire des solutions alg\'ebriques du syst\`eme de Garnier ; c'est d'ailleurs comme cela que Boalch retrouve $49$ des $52$ solutions sporadiques de Painlev\'e VI. On pourrait tenter de classifier ce type de solutions alg\'ebriques, par exemple pour Garnier avec $N=2$, il y en aura beaucoup, mais ce sont plut\^ot les solutions alg\'ebriques correspondant \`a des d\'eformations d'\'equations fuchsiennes non \'el\'ementaires (i.e. \`a monodromie Zariski dense) qui ont un int\'er\^et. On les appellera ``solutions alg\'ebriques non \'el\'ementaires''.
Dans le cas Painlev\'e VI ($N=1$), l'\'equation d'isomonodromie a de nombreuses sym\'etries, et partant d'une des $49$ solutions alg\'ebrique \'el\'ementaires construites par Boalch, on peut d\'eduire par sym\'etrie une autre solution alg\'ebrique, qui sera bien souvent non \'el\'ementaire.  
Le but principal de ce papier est de construire des solutions alg\'ebriques non \'el\'ementaires des syst\`emes de Garnier pour $N>1$ ; aucune n'\'etait connue avant.
La m\'ethode que nous allons utiliser est d\^ue \`a Kitaev \cite{Belyi} : elle permet de retrouver toutes les solutions alg\'ebriques de Painlev\'e VI modulo sym\'etries. L'id\'ee de d\'epart est extr\`emement simple ; elle ne n\'ecessite m\^eme pas de comprendre les \'equations d'isomonodromies.
On fixe une \'equation fuchsienne disons $E$ sur $\mathbb P^1$, puis on la tire en arri\`ere par une famille de rev\^etements ramifi\'es $\phi_t:\mathbb P^1\to \mathbb P^1$. On obtient alors une famille d'\'equations fuchsiennes $E_t:=\phi_t^*E$ qui, localement dans le param\`etre $t$, l\`a o\`u la d\'eformation est topologiquement triviale, est \'evidemment isomonodromique : la monodromie est essentiellement celle de l'\'equation $E$ en bas.
Maintenant, si l'on prescrit le type topologique du rev\^etement, la famille $\phi_t$ forme une famille de Hurwitz : elle est alg\'ebrique et la d\'eformation $E_t$ sera elle-m\^eme \`a coefficients alg\'ebriques. La difficult\'e principale pour mettre en pratique cette construction est que la dimension de d\'eformation est en g\'en\'eral trop petite face au nombre de p\^oles qu'aura $E_t$ : on ne construira qu'une solution alg\'ebrique partielle de l'\'equation d'isomonodromie, la solution compl\`ete restant en g\'en\'eral transcendante. Si $E$ a $n$ p\^oles sur $\mathbb P^1$ et $\phi_t$ de degr\'e $d$, alors $E_t$ aura en g\'en\'eral $nd$ p\^oles ; pour qu'elle soit \'el\'ementaire, on doit avoir $n\ge 3$ et l'espace de d\'eformation de $E_t$ sera de dimension $N(E_t)=nd-3\ge 3d-3$. Pourtant, le nombre de param\`etres libres dans la construction sera born\'e par le nombre de valeurs critiques de $\phi_t$, c'est-\`a-dire par $2d-2$ d'apr\`es Riemann-Hurwitz : $2d-2<<3d-3=N(E_t)$ (on veut \'evidemment $d>1$).
Pour obtenir une d\'eformation alg\'ebrique compl\`ete, il faudra que $\phi_t$ ramifie suffisamment au dessus
des p\^oles pour ne pas en avoir trop en haut ; pour autant, elle doit ramifier suffisamment en dehors pour garder
des param\`etres libres dans la construction.

Consid\'erons un exemple simple pour fixer les id\'ees. Prenons pour $E$ une \'equation hyperg\'eom\'etrique 
(\'equation fuchsienne sur $\mathbb P^1$ \`a $n=3$ p\^oles simples en $z=0,1,\infty$) et consid\'erons
la famille de rev\^etements doubles $\phi_t:\mathbb P^1\to\mathbb P^1$ qui ramifie au dessus d'un des p\^oles,
disons $z=\infty$, et d'un param\`etre libre $z=t$. Alors l'\'equation fuchsienne $E_t=\phi_t^*E$ aura 
(pour $t\not=0,1,\infty$) $5$ p\^oles, \`a savoir $2$ au dessus de $z=0$, $2$ au dessus de $z=1$, 
et $1$ au dessus de $z=\infty$ ; on n'aura qu'une solution partielle (de codimension $1$) au syst\`eme de Garnier 
correspondant (qui est ici de rang $N=2$). Par contre, si l'exposant de l'\'equation $E$ en $z=\infty$
est $\frac{1}{2}$ (i.e. la monodromie projective locale est d'ordre $2$), alors la singularit\'e de $E_t$ au dessus
sera apparente (sans monodromie locale) et on pourra la chasser par une transformation de jauge m\'eromorphe.
On se retrouve alors avec une d\'eformation \`a $1$ param\`etre d'\'equation fuchsienne $E'_t$ \`a $4$ p\^oles.
Il est facile de v\'erifier que le birapport des $4$ p\^oles varie avec $t$ : c'est une vraie d\'eformation.
C'est ainsi qu'on retrouve la famille de solutions alg\'ebriques de Painlev\'e VI \`a $2$ param\`etres 
(\`a savoir les exposants des p\^oles $z=0$ et $z=1$).

Il r\'esulte d'un th\'eor\`eme de Klein \cite{Klein} que toute \'equation fuchsienne \`a monodromie finie est, modulo transformation
de jauge, le tir\'e-en-arri\`ere $\phi^*E$ par un rev\^etement ramifi\'e d'une \'equation hyperg\'eom\'etrique (plus pr\'ecis\'ement dans la liste de Schwarz \cite{Hodgkinson}, \cite{Uniformisation}). En particulier, toutes les solutions alg\'ebriques \'el\'ementaires construites par Boalch (et plus g\'en\'eralement leurs analogues pour les syst\`emes de Garnier) s'obtiennent aussi avec la m\'ethode de Kitaev. Mais comme l'illustre l'exemple pr\'ec\'edent, elle permet aussi de construire des solutions alg\'ebriques non \'el\'ementaires. Doran a classifi\'e toutes celles que l'on pouvait construire de cette mani\`ere dans le cas de Painlev\'e VI : il retrouve les $3$ familles (\`a un ou deux param\`etres complexes) et $4$ solutions sporadiques. Ces derni\`eres s'obtiennent en tirant en arri\`ere les hyperg\'eom\'etriques
d'exposants $(\frac{1}{2},\frac{1}{3},\frac{1}{7})$ et $(\frac{1}{2},\frac{1}{3},\frac{1}{8})$ (groupes de triangles arithm\'etiques) par des rev\^etements de degr\'e $10$, $12$ et $18$. On montre que seuls les syst\`emes de Garnier de rang $N\le3$ ont des solutions compl\`etes non \'el\'ementaires construites par la m\'ethode de Kitaev. Elles sont list\'ees dans les sections $3$ et $5$. L'une d'elles est construite explicitement dans la derni\`ere section. C'est une partie de ma th\`ese que j'ai fait sous la direction de F. Loray \`a l'universit\'e de Rennes $1$.
\section{Structure orbifolde sous-jacente \`a une \'equation fuchsienne}

\subsection{Structure orbifolde formelle}
Soit $X$ une surface de Riemann compacte. Une structure obifolde formelle sur $X$ est la donn\'ee d'une famille finie $\{(t_i, p_i)\}_{i=1,\cdots, n}$ telle que :
\begin{itemize}
\item $t_1,\ldots,t_n\in X$ sont des points deux \`a deux distincts de $X$,
\item $p_1,\ldots,p_n\in \{2,3,4,5,\ldots,+\infty\}$ sont des poids.
\end{itemize}
On peut encore la d\'efinir par une application 
$$p:X\to \mathbb N^*\cup\{\infty\}$$
qui vaut $1$ sauf pour un nombre fini de points, ici $\{t_1,\ldots,t_n\}$, son support.

On aura aussi \`a consid\'erer des structures orbifoldes g\'en\'eralis\'ees o\`u cette fois
$$p:X\to \mathbb Q_+^*\cup\{\infty\}$$
(valant $1$ sauf pour un nombre fini de points). Pour \'eviter toute confusion, on parlera de structure orbifolde enti\`ere concernant la premi\`ere notion.

On d\'efinit la caract\'eristique d'Euler de la courbe orbifolde $\chi(X,p)$ par
$$\chi(X,p):=2-2g+\sum_{t\in X}(\frac{1}{p(t)}-1)=2-2g-n+\frac{1}{p_1}+\cdots+\frac{1}{p_n}$$
o\`u $g=g(X)$ est le genre de la courbe $X$.
Lorsque $p$ est \`a valeurs dans $\{1,\infty\}$, on retrouve la caract\'eristique d'Euler
classique de la courbe \'epoint\'ee $X\setminus\{p=\infty\}$.

Si $\phi:Y\to X$ est un rev\^etement ramifi\'e entre deux surfaces de Riemann compactes
et si $p:X\to \mathbb Q_+^*\cup\{\infty\}$ est une structure orbifolde sur $X$,
alors on d\'efinit le pull-back $\phi^*p$ par 
$$\phi^*p(t):=\frac{p(\phi(t))}{\mathrm{Ind}_\phi(t)}\ \ \ \ \text{pour tout}\ t\in Y$$
o\`u $\mathrm{Ind}_\phi(t)$ est l'indice de $\phi$ en $t$, i.e. $\mathrm{Ind}_\phi(t)=k$
si $\phi(z)=z^k$ pour des coordonn\'ees locales ad\'equates.
Par exemple, si $p$ est la structure triviale $p\equiv 1$ sur $X$, alors $\phi^*p=\mathrm{Ind}_\phi$,
la fonction indicielle.

\begin{prop}[Riemann-Hurwitz version orbifolde]
Si $\phi:Y\to X$ est un rev\^etement ramifi\'e de degr\'e $d$,
alors $\chi(Y,\phi^*p)=d\cdot\chi(X,p)$.
\end{prop}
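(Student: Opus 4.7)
Mon approche consiste à combiner directement la définition de la caractéristique d'Euler orbifolde avec la formule classique de Riemann-Hurwitz appliquée au revêtement $\phi:Y\to X$. En développant le membre de gauche selon la définition donnée dans l'excerpt, on obtient
$$\chi(Y,\phi^*p)=2-2g(Y)+\sum_{t\in Y}\left(\frac{\mathrm{Ind}_\phi(t)}{p(\phi(t))}-1\right).$$

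La première étape consisterait à utiliser la formule classique de Riemann-Hurwitz, soit $2-2g(Y)=d(2-2g(X))-\sum_{t\in Y}(\mathrm{Ind}_\phi(t)-1)$, pour remplacer le terme topologique. En injectant ceci ci-dessus et en regroupant les deux sommes indexées par $Y$, j'obtiendrais
$$\chi(Y,\phi^*p)=d(2-2g(X))+\sum_{t\in Y}\mathrm{Ind}_\phi(t)\left(\frac{1}{p(\phi(t))}-1\right).$$

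La deuxième étape, qui est le point-clé mais n'a rien de difficile, serait de réorganiser la dernière somme fibre par fibre. Pour $s\in X$ fixé, $\phi(t)=s$ est constant dans la fibre, donc on peut sortir $\frac{1}{p(s)}-1$ et utiliser l'identité $\sum_{t\in\phi^{-1}(s)}\mathrm{Ind}_\phi(t)=d$ (définissante pour le degré d'un revêtement ramifié). Ainsi
$$\sum_{t\in Y}\mathrm{Ind}_\phi(t)\left(\frac{1}{p(\phi(t))}-1\right)=d\sum_{s\in X}\left(\frac{1}{p(s)}-1\right),$$
étant entendu que pour $s\notin\{t_1,\ldots,t_n\}$ le terme est nul, et que pour $t\in Y$ non ramifié avec $p(\phi(t))=1$ la contribution est nulle aussi, ce qui légitime les sommations formellement infinies.

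Il ne reste alors qu'à reconnaître $d\cdot\chi(X,p)$ dans le membre de droite. L'obstacle principal est purement \emph{bookkeeping} : vérifier que la convention $\frac{1}{\infty}=0$ et le traitement des points non ramifiés hors du support de $p$ sont bien compatibles, et que l'identité fonctionne identiquement quand $p$ prend la valeur $\infty$ (auquel cas $\phi^*p$ peut valoir $\infty/k=\infty$, ce qui est cohérent avec l'idée que la préimage d'un point épointé reste épointée). Aucun calcul long n'est requis ; la démonstration est essentiellement une réécriture de Riemann-Hurwitz après incorporation des poids.
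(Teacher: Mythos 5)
Votre démonstration est correcte : c'est exactement le calcul standard que l'article sous-entend en énonçant la proposition sans preuve — on développe $\chi(Y,\phi^*p)$ via $\frac{1}{\phi^*p(t)}=\frac{\mathrm{Ind}_\phi(t)}{p(\phi(t))}$, on substitue la formule de Riemann--Hurwitz classique, puis on regroupe fibre par fibre avec $\sum_{t\in\phi^{-1}(s)}\mathrm{Ind}_\phi(t)=d$. Vos vérifications de finitude des sommes et de la convention $\frac{1}{\infty}=0$ sont bien les seuls points de \emph{bookkeeping} à contrôler ; rien à redire.
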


Si $p$ et $p'$ sont deux structures orbifoldes sur $X$, on dira que $p$ est inf\'erieure
\`a $p'$  et on notera $p\le p'$ lorsque $p(t)\le p'(t)$ pour tout $t\in X$ 
(ici, $\infty=+\infty$ est un infiniment grand). On v\'erifie imm\'ediatement que
$$p\le p'\ \ \ \Rightarrow \ \ \ \chi(X,p)\ge \chi(X,p').$$
(Attention au renversement d'in\'egalit\'es !)

On d\'efinit la structure orbifolde enti\`ere 
sous-jacente $\underline{p}$ \`a une structure orbifolde g\'en\'eralis\'ee
$$p:X\to \mathbb Q_+^*\cup\{\infty\}$$
par 
\begin{itemize}
\item si $p(t)=\frac{n}{q}\not=\infty$ alors $\underline p(t)=n$ o\`u $(n,q)=1$ ;
\item si $p(t)=\infty$ alors $\underline p(t)=\infty$.
\end{itemize}
Autrement dit, $\underline p(t)$ est le plus petit multiple entier de $p(t)$ ; en particulier,
on a 
$$p(t)\le \underline p(t).$$

Avant de voir le lien avec les \'equations fuchsiennes, rappelons le contexte g\'eom\'etrique dans lequel les structures orbifoldes apparaissent naturellement.

\subsection{Structure orbifolde m\'etrique et uniformisation}

Consid\'erons un groupe fuchsien $\Gamma\subset\mathrm{PSL}(2,\mathbb R)$ de type fini agissant proprement discontinument (par isom\'etries) sur le demi-plan de Poincar\'e $\mathbb H$. Supposons en outre $\Gamma$ de co-volume fini, de sorte que $\mathbb H/\Gamma$ est une surface
de Riemann compacte \`a laquelle on a enlev\'e un nombre fini de points.
Notons $\phi:\mathbb H\to\mathbb H/\Gamma\subset X$.
La fonction indicielle $\mathrm{Ind}_\phi$ est constante sur les fibres de $\phi$
et on d\'efinit une structure orbifolde (enti\`ere) sur la courbe $X$ en posant
\begin{itemize}
\item $p(\phi(w)):=\mathrm{Ind}_\phi(w)$ pour tout $w\in\mathbb H$,
\item $p(t)=\infty$ si $t\in X\setminus(\mathbb H/\Gamma)$.
\end{itemize}
Le support de $p$ est pr\'ecis\'ement la r\'eunion des valeurs critiques de $\phi$ et des pointes.
Par construction, $\phi^*p$ (qui se d\'efinit localement comme dans la section pr\'ec\'edente) est la structure
orbifolde triviale sur $\mathbb H$. 

La m\'etrique de Poincar\'e $\mu$ sur $\mathbb H$ descend sur la courbe $X$ en une m\'etrique singuli\`ere 
(\`a courbure constante $-1$ l\`a o\`u elle est lisse).
Le support de la structure orbifolde $p$ est pr\'ecis\'ement la r\'eunion des points singuliers de la m\'etrique :
l'angle de la surface $X$ autour d'un de ses points $t$ est donn\'e par $\frac{2\pi}{p(t)}$.
Rappelons enfin la formule de Gauss-Bonnet dans ce cadre
$$\mathrm{Aire}(X,\mu)=-2\pi\chi(X,p).$$

On d\'efinit de la m\^eme mani\`ere la structure orbifolde d'un quotient de la sph\`ere de Riemann $\mathbb P^1$
par un groupe fini d'isom\'etries pour sa m\'etrique \`a courbure constante $+1$
et on a (attention au signe)
$$\mathrm{Aire}(X,\mu)=2\pi\chi(X,p).$$
On peut enfin consid\'erer les quotients du plan $\mathbb C$ par un groupe d'isom\'etries euclidiennes,
mais la caract\'eristique d'Euler $\chi(X,p)=0$ ne caract\'erise plus l'aire.

\begin{theo}[Klein-Poincar\'e]
Une surface de Riemann compacte munie d'une structure orbifolde enti\`ere $(X,p)$ est uniformisable,
c'est \`a dire correspond \`a un des quotients d\'ecrits au dessus, si et seulement si on n'est pas dans
l'une des situations suivantes :
\begin{itemize}
\item $\mathrm{genre}(X)=0$ et  $\mathrm{support}(p)=\{t\}$ avec $p(t)<\infty$ ;
\item $\mathrm{genre}(X)=0$ et  $\mathrm{support}(p)=\{t_1,t_2\}$ avec $p(t_1)\not=p(t_2)$.
\end{itemize}
De plus, une orbifolde uniformisable est \`a courbure $>0$ (resp. $=0$ ou $<0$) si et seulement si 
$\chi(X,p)>0$ (resp. $=0$ ou $<0$).
\end{theo} 

On en d\'eduit la liste des orbifoldes uniformisables \`a courbure $>0$ :
\begin{itemize}
\item $\mathrm{genre}(X)=0$ et  $\mathrm{support}(p)=\emptyset$,
\item $\mathrm{genre}(X)=0$ et  $\mathrm{support}(p)=\{t_1,t_2\}$ avec $p(t_1)=p(t_2)<\infty$,
\item $\mathrm{genre}(X)=0$ et  $\mathrm{support}(p)=\{t_1,t_2,t_3\}$ o\`u $p$ prend les valeurs :
$$(2,2,k),\ k<\infty,\ \ \  (2,3,3),\ \ \  (2,3,4),\ \ \  \text{ou}\ \ \  (2,3,5)\ ;$$
\end{itemize}
ainsi que celles de courbure nulle :
\begin{itemize}
\item $\mathrm{genre}(X)=0$ et  $\mathrm{support}(p)=\{t\}$ avec $p(t)=\infty$,
\item $\mathrm{genre}(X)=0$ et  $\mathrm{support}(p)=\{t_1,t_2\}$ avec $p(t_1)=p(t_2)=\infty$,
\item $\mathrm{genre}(X)=0$ et  $\mathrm{support}(p)=\{t_1,t_2,t_3\}$ o\`u $p$ prend les valeurs
$$(2,2,\infty),\ \ \  (2,3,6),\ \ \  (2,4,4),\ \ \  \text{ou}\ \ \  (3,3,3),$$
\item $\mathrm{genre}(X)=0$ et  $\mathrm{support}(p)=\{t_1,t_2,t_3,t_4\}$ o\`u $p$ prend les valeurs
$(2,2,2,2)$,
\item $\mathrm{genre}(X)=1$ et  $\mathrm{support}(p)=\emptyset$.
\end{itemize}

Rappelons que les aires $-2\pi\chi$ {d'orbifoldes (uniformisables) hyperboliques ne sont pas arbitrairement petites,
mais born\'ees inf\'erieurement par l'aire $\frac{\pi}{21}$ de l'orbifolde hyperg\'eom\'etrique $(2,3,7)$. Nous listons 
ci-dessous les orbifoldes d'aire $\le\frac{\pi}{3}$ (c'est \`a dire pour lesquelles $-\chi\ge \frac{1}{6}$, on oublie syst\'ematiquement le facteur $2\pi$) ; elles sont toutes hyperg\'eom\'etriques sauf une, de genre $g=0$ avec $n=4$ points orbifoldes.

\begin{table}[htdp]
\begin{center}
\begin{tabular}{|c|c|c|c|c|c|c|c|c|}
\hline 
$(2,3,p)$ &$(2,3,7)$& $(2,3,8)$& $(2,3,9)$& $(2,3,10)$& $(2,3,11)$& $(2,3,12)$&  $\cdots$& $(2,3,\infty)$\\
\hline
$-\chi=\frac{p-6}{6p}$ & $\frac{1}{42}$  & $\frac{1}{24}$ & $\frac{1}{18}$ & $\frac{1}{15}$ & $\frac{5}{66}$ & $\frac{1}{12}$& $\cdots$&  $\frac{1}{6}$\\
\hline

\hline 
$(2,4,p)$ &$(2,4,5)$& $(2,4,6)$& $(2,4,7)$& $(2,4,8)$&   $\cdots$&$\cdots$&$\cdots$& $(2,4,\infty)$\\
\hline
$-\chi=\frac{p-4}{4p}$ & $\frac{1}{20}$  & $\frac{1}{12}$ & $\frac{3}{28}$ & $\frac{1}{8}$ & $\cdots$& $\cdots$&$\cdots$& $\frac{1}{4}$\\
\hline

\hline 
$(2,5,p)$ &$(2,5,5)$& $(2,5,6)$& $(2,5,7)$&   $\cdots$& $\cdots$&$(2,5,\infty)$&&$(2,6,6)$\\
\hline
$-\chi$ & $\frac{1}{10}$  & $\frac{2}{15}$ & $\frac{11}{70}$ & $\cdots$&$\cdots$&  $\frac{3}{10}$&&$\frac{1}{6}$\\
\hline

\hline 
$(3,3,p)$ &$(3,3,4)$& $(3,3,5)$& $(3,3,6)$&    $\cdots$& $(3,3,\infty)$&&$(3,4,4)$ &$(2,2,2,3)$\\
\hline
$-\chi=\frac{p-3}{3p}$ & $\frac{1}{12}$  & $\frac{2}{15}$ & $\frac{1}{6}$& $\cdots$&  $\frac{1}{3}$&&$\frac{1}{6}$ & $\frac{1}{6}$\\
\hline
\end{tabular}
\end{center}
\end{table}
\vspace{1cm}
Enfin, -$\chi$ est minor\'e en fonction du genre $g$ et du nombre $n$ de points orbifoldes :
\begin{table}[htdp]
\begin{center}
\begin{tabular}{|c|c|c|c|c|c|c|c|c|}
\hline 
$(g,n)$ &$(0,3)$&$(0,4)$&$(0,5)$&$(0,6)$&$(1,1)$&$(1,2)$&$(2,0)$\\
\hline
$-\chi\ge$ & $\frac{1}{42}$ & $\frac{1}{6}$ & $\frac{1}{2}$ &$1$ & $\frac{1}{2}$ &$1$& $2$ \\
\hline
\end{tabular}
\end{center}
\end{table}

\subsection{Structure orbifolde et \'equations fuchsiennes}

Soit $E$ une \'equation fuchsienne sur $X$.
On d\'efinit la structure orbifolde de  $E$ (ou plut\^ot de la structure projective induite par $E$ sur $X$)  de la mani\`ere suivante. 
\begin{itemize}
\item $p(t)=1$ si $t\in X$ est un point r\'egulier (i.e. non singulier) de l'\'equation $E$ ;
\item $p(t)=\frac{1}{\vert\theta\vert}\in\mathbb Q^+$ si $t\in X$ est un point singulier de l'\'equation $E$ d'exposant $\theta\in \mathbb Q$ non logarithmique (c'est-\`a-dire \`a monodromie p\'eriodique) ;
\item $p(t)=\infty$ sinon ($t\in X$ est un point singulier de l'\'equation $E$ d'exposant $\theta\not\in\mathbb Q$, ou encore un point singulier logarithmique d'exposant $\theta\in\mathbb Z$).
\end{itemize}
On notera $p(E)$ cette structure orbifolde. On a alors

\begin{prop}
Si $E$ est une \'equation fuchsienne sur $X$ et $\phi:Y\to X$ un rev\^etement ramifi\'e de degr\'e $d$, alors
$$p(\phi^*E)=\phi^*p(E).$$
\end{prop}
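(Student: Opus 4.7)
Mon plan est de v\'erifier l'\'egalit\'e $p(\phi^*E)(t)=p(E)(\phi(t))/\mathrm{Ind}_\phi(t)$ ponctuellement pour chaque $t\in Y$. Je fixerais $s=\phi(t)$ et $k=\mathrm{Ind}_\phi(t)$, puis je choisirais des coordonn\'ees locales $z$ centr\'ee en $s$ et $w$ centr\'ee en $t$ dans lesquelles $\phi$ s'\'ecrit simplement $z=w^k$. Tout revient alors \`a comparer les exposants locaux de $E$ en $z=0$ et ceux de $\phi^*E$ en $w=0$.

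Premi\`ere \'etape : rappeler que, pr\`es d'un point singulier, les solutions locales d'une \'equation fuchsienne du second ordre sont, soit du type $z^{\lambda_1}$ et $z^{\lambda_2}$ (cas non logarithmique), soit contiennent un terme logarithmique $z^{\lambda}\log z$ (cas logarithmique, qui impose $\lambda_1-\lambda_2\in\mathbb Z$). Deuxi\`eme \'etape, qui est le point-cl\'e : la substitution $z=w^k$ transforme $z^\lambda$ en $w^{k\lambda}$ et $\log z$ en $k\log w$. Par cons\'equent, les exposants de $\phi^*E$ en $w=0$ sont exactement $k\lambda_1$ et $k\lambda_2$, le caract\`ere logarithmique est pr\'eserv\'e, et la diff\'erence d'exposants $\theta=\lambda_1-\lambda_2$ est multipli\'ee par $k$.

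Troisi\`eme \'etape : examiner s\'epar\'ement les trois cas de la d\'efinition de $p(E)$. Si $s$ est r\'egulier, les exposants $0,1$ de $E$ en $s$ deviennent $0,k$, ce qui produit pour $k>1$ une singularit\'e apparente d'exposant $k$, et l'on trouve $p(\phi^*E)(t)=1/k=p(E)(s)/k$. Si $s$ est singulier non logarithmique d'exposant rationnel $\theta$, alors $\phi^*E$ est non logarithmique d'exposant $k\theta$ en $t$, et $p(\phi^*E)(t)=1/|k\theta|=(1/|\theta|)/k$. Si enfin $s$ est logarithmique ou d'exposant irrationnel, il en va de m\^eme en $t$ pour $\phi^*E$ (un terme $\log$ reste un $\log$, et $k\theta$ est irrationnel si et seulement si $\theta$ l'est), donc $p(\phi^*E)(t)=\infty=\infty/k$.

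L'obstacle principal me semblerait \^etre le cas r\'egulier avec $k>1$ : il faut bien voir que le tir\'e-en-arri\`ere introduit formellement une singularit\'e apparente ``artificielle'' d'exposant $k$ en $w=0$ (invisible au niveau de la monodromie locale, qui reste triviale), et que la d\'efinition choisie pour $p$ en tient correctement compte, fournissant $p(E)(s)=1$ et donc $p(\phi^*E)(t)=1/k$. C'est pr\'ecis\'ement cet aspect qui permettra plus tard \`a la formule de Riemann--Hurwitz orbifolde de fonctionner, en convertissant automatiquement les points de ramification en points orbifolds.
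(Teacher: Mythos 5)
Votre v\'erification locale est correcte et constitue pr\'ecis\'ement l'argument que l'article sous-entend : la proposition y est \'enonc\'ee sans d\'emonstration, comme cons\'equence imm\'ediate des d\'efinitions, et votre calcul en coordonn\'ees $z=w^k$ (les exposants et leur diff\'erence sont multipli\'es par $k$, le caract\`ere logarithmique est pr\'eserv\'e) est exactement la v\'erification attendue. Le point que vous mettez en avant --- un point r\'egulier situ\'e sous un point de ramification d'indice $k$ devient une singularit\'e apparente d'exposant $k$, d'o\`u $p(\phi^*E)(t)=\frac{1}{k}$ --- est bien celui qui rend la formule coh\'erente avec l'exemple $\phi^*p=\mathrm{Ind}_\phi$ pour la structure triviale et avec la convention $\underline p=1$ aux singularit\'es apparentes utilis\'ee ensuite.
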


La structure orbifolde sous-jacente $\underline{p}(E)$ ne d\'epend que de la monodromie de $E$ :
\begin{itemize}
\item si $t\in X$ est un point r\'egulier (i.e. non singulier) de l'\'equation $E$, alors $\underline{p}(t)=1$ ;
\item si $t\in X$ est un point singulier de l'\'equation $E$, alors $\underline{p}(t)\in \{2,3,4,5,\ldots,+\infty\}$ est l'ordre  de la monodromie locale autour de $t$.
\end{itemize}
Nous observons que la structure orbifolde sous-jacente est invariante par transformations birationnelles sur l'\'equation. Par exemple, $\underline{p}(t)=1$ si $t$ est une singularit\'e apparente.

\begin{prop}
Si $E$ une \'equation fuchsienne sur $X$ et $\phi:Y\to X$ un rev\^etement ramifi\'e de degr\'e $d$, alors
$$\underline p(\phi^*E)\ge\phi^*\underline p(E).$$
En particulier, $\chi(Y,\underline p(\phi^*E))\le d\cdot\chi(X,\underline p(E))$.
\end{prop}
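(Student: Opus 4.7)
Le plan est d'\'etablir l'in\'egalit\'e $\underline p(\phi^*E)\ge\phi^*\underline p(E)$ ponctuellement, en chaque point $t\in Y$, puis d'en d\'eduire l'in\'egalit\'e sur les caract\'eristiques d'Euler via la formule de Riemann--Hurwitz orbifolde (proposition pr\'ec\'edente). Fixons donc $t\in Y$, posons $s=\phi(t)$, $k=\mathrm{Ind}_\phi(t)\ge 1$, et notons $n:=\underline p(E)(s)\in\{1,2,3,\ldots,\infty\}$. Par d\'efinition du tir\'e-en-arri\`ere d'une structure orbifolde, $\phi^*\underline p(E)(t)=n/k$, et il s'agit donc de montrer que $\underline p(\phi^*E)(t)\ge n/k$.

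L'observation centrale est que la monodromie locale se transforme par puissance : un petit lacet autour de $t\in Y$ se projette par $\phi$ sur un lacet qui tourne $k$ fois autour de $s\in X$, donc si $M$ d\'esigne la monodromie locale de $E$ autour de $s$, la monodromie locale de $\phi^*E$ autour de $t$ est $M^k$ (\`a conjugaison pr\`es). Comme $\underline p$ est un invariant birationnel mesurant exactement l'ordre de la monodromie locale, on a $\underline p(\phi^*E)(t)=\mathrm{ord}(M^k)$. Si $n<\infty$, il vient $\mathrm{ord}(M^k)=n/\gcd(n,k)\ge n/k$, puisque $\gcd(n,k)\le k$ ; si $n=\infty$, c'est-\`a-dire si $s$ est une singularit\'e d'exposant irrationnel (auquel cas $k\theta$ reste irrationnel) ou une singularit\'e logarithmique (o\`u la partie logarithmique subsiste par tir\'e-en-arri\`ere), alors $M$ et par suite $M^k$ sont d'ordre infini, d'o\`u $\underline p(\phi^*E)(t)=\infty\ge n/k$.

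Pour la seconde assertion, on combine l'in\'egalit\'e ponctuelle $\underline p(\phi^*E)\ge\phi^*\underline p(E)$ que l'on vient d'\'etablir, la monotonie $p\le p'\Rightarrow\chi(X,p)\ge\chi(X,p')$ rappel\'ee ci-dessus, et la formule de Riemann--Hurwitz orbifolde, pour obtenir
$$\chi(Y,\underline p(\phi^*E))\ \le\ \chi(Y,\phi^*\underline p(E))\ =\ d\cdot\chi(X,\underline p(E)).$$

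Le principal point d\'elicat r\'eside dans le traitement du cas logarithmique : il faut s'assurer que la partie logarithmique de $E$ en $s$ (lorsque l'exposant est entier mais la monodromie unipotente non triviale) engendre bien une partie logarithmique non triviale de $\phi^*E$ en $t$, ce qui se v\'erifie par un calcul local direct en coordonn\'ees $w\mapsto w^k$ et garantit que l'ordre de la monodromie locale reste infini. Hors de ce cas, tout se r\'eduit \`a l'identit\'e \'el\'ementaire $\mathrm{ord}(M^k)=\mathrm{ord}(M)/\gcd(\mathrm{ord}(M),k)$.
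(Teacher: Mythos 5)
Votre preuve est correcte et correspond exactement \`a l'argument que l'article sous-entend : la proposition y est \'enonc\'ee sans d\'emonstration, comme cons\'equence directe des d\'efinitions, et votre raisonnement (la monodromie locale de $\phi^*E$ en $t$ est conjugu\'ee \`a $M^k$, d'ordre $n/\gcd(n,k)\ge n/k$, puis monotonie de $\chi$ et Riemann--Hurwitz orbifolde pour la seconde assertion) est pr\'ecis\'ement celui attendu. Signalons seulement que le cas $n=\infty$ ne n\'ecessite pas l'analyse d\'etaill\'ee du cas logarithmique que vous pr\'esentez comme point d\'elicat : toute puissance non triviale d'un \'el\'ement d'ordre infini (parabolique ou \`a rapport de valeurs propres non racine de l'unit\'e) reste d'ordre infini, ce qui suffit.
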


Si une orbifolde enti\`ere $(X,p)$ est uniformisable, i.e. d\'efinie par un rev\^etement ramifi\'e $\phi:U\to X$, $U=\mathbb P^1$, $\mathbb C$ ou $\mathbb H$, alors la d\'eriv\'ee schwarzienne de $\phi^{-1}$ (qui ne d\'epend pas de la d\'etermination choisie) d\'efinit une
\'equation fuchsienne 
$$E\ :\ u''+\frac{S(\phi^{-1})}{2}u=0$$ 
sur $X$ dont la structure orbifolde est pr\'ecis\'ement $p$. Par exemple, on a 
\begin{itemize}
\item si $X=\mathbb P^1$ et $\mathrm{support}(p)=\{0,1,\infty\}$, alors $E$ est l'\'equation hyperg\'eom\'etrique ;
\item si $X=\mathbb P^1$ et $\#\mathrm{support}(p)=4$, alors $E$ est l'\'equation de Heun ;
\item si $X$ est une courbe elliptique et $\#\mathrm{support}(p)=1$, alors $E$ est l'\'equation de Lam\'e.
\end{itemize}
Le r\'esultat principal de cette section est la
\begin{prop}
Soit $E$ une \'equation fuchsienne sur $X$ dont la monodromie est non \'el\'ementaire, i.e. \`a image Zariski dense dans $\mathrm{PGL}(2,\mathbb C)$. Alors sa structure orbifolde enti\`ere sous-jacente $(X,\underline p)$ est une orbifolde uniformisable hyperbolique.
\end{prop}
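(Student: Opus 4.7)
Le plan est de raisonner par contraposition : supposons que $(X,\underline p(E))$ soit non uniformisable, ou bien uniformisable de courbure positive ou nulle ; je montre qu'alors l'image de la monodromie de $E$ n'est pas Zariski dense dans $\mathrm{PGL}(2,\mathbb C)$. L'observation de d\'epart est que, par la d\'efinition m\^eme de $\underline p$, la monodromie locale de $E$ en un point singulier $t$ est exactement d'ordre $\underline p(t)$ (triviale si $\underline p(t)=1$, finie d'ordre $\underline p(t)$ si $2\le\underline p(t)<\infty$, infinie sinon). Par suite la repr\'esentation de monodromie de $E$ se factorise \`a travers le groupe fondamental orbifolde $\pi_1^{orb}(X,\underline p)$, et il suffit alors d'analyser cas par cas les orbifoldes produites par la classification de Klein--Poincar\'e rappel\'ee plus haut.

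Les deux cas non uniformisables s'\'eliminent d'eux-m\^emes. Si $X=\mathbb P^1$ et $\mathrm{support}(\underline p)=\{t\}$, alors $\pi_1(\mathbb P^1\setminus\{t\})=1$ donc la monodromie est triviale ; ceci force $\underline p(t)=1$, contredisant $t\in\mathrm{support}(\underline p)$. Si $\mathrm{support}(\underline p)=\{t_1,t_2\}$, le g\'en\'erateur de $\pi_1(\mathbb P^1\setminus\{t_1,t_2\})\simeq\mathbb Z$ r\'ealise les deux lacets locaux (\`a inverse pr\`es), ce qui impose $\underline p(t_1)=\underline p(t_2)$ ; de toute fa\c con l'image est cyclique donc \'el\'ementaire. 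Lorsque $(X,\underline p)$ est uniformisable de courbure strictement positive, la liste montre que $\pi_1^{orb}(X,\underline p)$ est fini (trivial, cyclique, di\'edral, $A_4$, $S_4$ ou $A_5$) ; son image dans $\mathrm{PGL}(2,\mathbb C)$ est alors finie, donc \'el\'ementaire.

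Il reste le cas uniformisable plat. D'apr\`es la liste, $\pi_1^{orb}(X,\underline p)$ est soit trivial ou $\mathbb Z$ (cas \`a un ou deux points $\underline p=\infty$), soit un groupe cristallographique euclidien — di\'edral infini pour $(2,2,\infty)$, groupe triangulaire euclidien pour $(2,3,6)$, $(2,4,4)$, $(3,3,3)$, groupe de pavage pour $(2,2,2,2)$, ou $\mathbb Z^2$ pour le tore — tous virtuellement ab\'eliens, avec sous-groupe ab\'elien d'indice fini isomorphe \`a $1$, $\mathbb Z$ ou $\mathbb Z^2$. Pour conclure, j'utilise l'observation standard que tout sous-groupe ab\'elien de $\mathrm{PGL}(2,\mathbb C)$ est \'el\'ementaire (il est contenu dans le stabilisateur d'un ou de deux points de $\mathbb P^1$), et qu'une extension finie d'un sous-groupe \'el\'ementaire demeure \'el\'ementaire : le sous-groupe ab\'elien d'indice fini fixe un ensemble $F\subset\mathbb P^1$ de cardinal $\le 2$ que le groupe ambiant permute par conjugaison, de sorte qu'un sous-groupe d'indice au plus $2$ stabilise $F$ point par point, ce qui emp\^eche l'image globale d'\^etre Zariski dense. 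Le principal obstacle est pr\'ecis\'ement ce dernier pas : il faut identifier pour chaque orbifolde plate le groupe cristallographique correspondant et v\'erifier sa virtuelle ab\'elianit\'e, puis traduire ``virtuellement ab\'elien dans $\mathrm{PGL}(2,\mathbb C)$'' en ``non Zariski dense''. Tout le reste se lit directement sur la classification de Klein--Poincar\'e et sur la factorisation de $\rho$ \`a travers $\pi_1^{orb}$.
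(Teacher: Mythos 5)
Your proof is correct, but it follows a genuinely different route from the paper's. You factor the projectivized monodromy through the orbifold fundamental group $\pi_1^{orb}(X,\underline p)$ (legitimate, since by definition $\underline p(t)$ is exactly the order of the local projective monodromy, and apparent singularities have $\underline p=1$), and then argue group-theoretically: bad orbifolds give trivial or cyclic image, spherical ones give finite image, euclidean ones give virtually abelian image, and a finite or virtually abelian subgroup of $\mathrm{PGL}(2,\mathbb C)$ cannot be Zariski dense. The paper instead stays at the level of the equations: it normalizes the exponents by birational gauge transformations into $[0,\frac{1}{2}]$ (one possibly in $[\frac{1}{2},1]$), observes that the non-hyperbolic underlying structures then leave only the uniformizing spherical or euclidean equations plus the exceptional hypergeometric triples $(\frac{1}{2},\frac{1}{2},\frac{l}{k})$, $(\frac{1}{2},\frac{1}{3},\frac{2}{5})$, $(\frac{1}{3},\frac{1}{3},\frac{2}{3})$, and disposes of the latter by Schwarz's list of hypergeometrics with finite monodromy. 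Your argument buys uniformity: it needs neither rigidity of hypergeometrics nor Schwarz's list, and it treats cases with accessory parameters (e.g. the underlying structure $(2,2,2,2)$, which the paper's sketch passes over quickly) on the same footing; the paper's argument buys finer, explicit information on which exponent triples can occur. One step you should tighten: ``virtually abelian image implies not Zariski dense'' is best justified by noting that the image of the normal translation (or cyclic) subgroup is normal in the image, so its fixed or stabilized set of at most two points (or, if that image is trivial, finiteness of the quotient) gives a finite orbit on $\mathbb P^1$, whence the Zariski closure has abelian identity component of dimension at most $1<3=\dim\mathrm{PGL}(2,\mathbb C)$; your ``index at most $2$'' phrasing is slightly off but the conclusion stands.
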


\begin{proof}Il suffit de montrer que si la structure orbifolde sous-jacente (qui est enti\`ere) n'est pas uniformisable ou n'est pas hyperbolique, alors l'\'equation fuchsienne $E$ est \'el\'ementaire. Rappelons que par transformation de jauge birationnelle, on peut ramener tous les exposants rationnels de $E$ dans l'intervalle $[0,\frac{1}{2}]$ sous l'action du groupe $\langle -\theta,\theta+1\rangle$,
sauf peut-\^etre un que l'on peut ramener dans l'intervalle $[\frac{1}{2},1]$ (sous le m\^eme groupe). Compte-tenu des listes au dessus, on doit donc en outre consid\'erer les cas hyperg\'eom\'etriques $(g,n)=(0,3)$ avec exposants $(\frac{1}{2},\frac{1}{2},\frac{l}{k})$, $(\frac{1}{2},\frac{1}{3},\frac{2}{5})$ et $(\frac{1}{3},\frac{1}{3},\frac{2}{3})$. Mais tous ces triplets sont dans la liste de Schwarz des hyperg\'eom\'etriques \`a monodromie finie, donc \'el\'ementaires eux aussi.
\end{proof}
\subsection{Cons\'equences}
On cherche \`a classifier les couples $(E,\phi)$ o\`u $E$ est une \'equation fuchsienne sur une surface de Riemann $X$ et $\phi:\tilde X\to X$ un rev\^etement ramifi\'e (fini) tels que
\begin{itemize}
\item la monodromie de $E$ est non \'el\'ementaire,
\item  $\phi$ ramifie au dessus de $N\ge 3\tilde g-3+\tilde n$ points distincts entre eux et distincts des singularit\'es non apparentes de $E$
\end{itemize}
o\`u $\tilde g$ est le genre de $\tilde X$ et $\tilde n$ le nombre de singularit\'es non apparentes de $\tilde E$.

La derni\`ere condition nous assure que $\phi$ peut se d\'eformer avec $N$ param\`etres ind\'ependants, c'est \`a dire au moins la dimension de d\'eformation isomonodromique de $\tilde E$ (dimension de l'espace de Teichm\"uller). 
On d\'eduit de la section pr\'ec\'edente les restrictions suivantes
\begin{prop}
Sous les hypoth\`eses ci-dessus, on a $g(X)=g(\tilde X)=0$ et on est dans l'un des cas suivants
\begin{itemize}
\item $E$ a $3$ singularit\'es non apparentes et $\deg(\phi)\le 42$,
\item $E$ a $4$ singularit\'es non apparentes et $\deg(\phi)\le 6$,
\item $E$ a $5$ singularit\'es non apparentes et $\deg(\phi)\le 2$.
\end{itemize}
\end{prop}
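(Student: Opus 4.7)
Le plan est d'utiliser conjointement la version orbifolde de Riemann-Hurwitz, la caractérisation hyperbolique uniformisable obtenue dans la proposition précédente, et la contrainte de ramification $N\ge 3\tilde g-3+\tilde n$. Je commencerais par observer que la monodromie de $\tilde E:=\phi^*E$ se factorise par celle de $E$, donc reste Zariski dense ; la proposition précédente s'applique alors à $(\tilde X,\underline p(\tilde E))$, qui est en conséquence une orbifolde uniformisable hyperbolique, avec $-\chi(\tilde X,\underline p(\tilde E))>0$.

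L'étape clé est l'identité
$$-\chi(\tilde X,\underline p(\tilde E))=d\cdot(-\chi(X,\underline p(E)))+R,$$
obtenue en combinant la formule de Riemann-Hurwitz orbifolde $\chi(\tilde X,\phi^*\underline p(E))=d\cdot\chi(X,\underline p(E))$ avec la définition
$$R:=\sum_{t\in\tilde X}\left[\frac{1}{\phi^*\underline p(E)(t)}-\frac{1}{\underline p(\tilde E)(t)}\right]\ge 0.$$
Un calcul local montre qu'en un point $t$ d'indice $e>1$ au-dessus d'une valeur régulière de $E$, on a $\phi^*\underline p(E)(t)=1/e$ tandis que $\underline p(\tilde E)(t)=1$ (la ramification y crée une singularité apparente de $\tilde E$), d'où une contribution $e-1$ à $R$. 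Les ramifications au-dessus des singularités non apparentes de $E$ apportent elles aussi une contribution positive ou nulle. Pour chacun des $N$ points hors des singularités non apparentes de $E$ au-dessus desquels $\phi$ ramifie, on a donc $\sum_{t\in\phi^{-1}(z)}(e_t-1)\ge 1$, et ainsi $R\ge N\ge 3\tilde g-3+\tilde n$.

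Pour majorer $-\chi(\tilde X,\underline p(\tilde E))$, j'emploierais la borne triviale $-\chi(\tilde X,\underline p(\tilde E))\le 2\tilde g-2+\tilde n$, qui vient du fait que $\underline p(\tilde E)$ est supportée en les $\tilde n$ singularités non apparentes de $\tilde E$, avec $\underline p\ge 2$ en ces points. En combinant avec l'étape précédente, il vient
$$d\cdot(-\chi(X,\underline p(E)))\le 1-\tilde g.$$
Le membre de gauche étant strictement positif, ceci force $\tilde g=0$ ; la surjectivité de $\phi:\tilde X\to X$ entraîne alors $g(X)=0$.

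Pour conclure, je discuterais selon le nombre $n$ de singularités non apparentes de $E$ (qui vérifie $n\ge 3$ par non-élémentarité) au moyen de la table de minorations de $-\chi$ présentée plus haut : on a $-\chi(X,\underline p(E))\ge 1/42,\,1/6,\,1/2,\,1$ pour $n=3,\,4,\,5,\,\ge 6$ respectivement. L'inégalité $d\cdot(-\chi(X,\underline p(E)))\le 1$ fournit alors les bornes $d\le 42,\,6,\,2,\,1$. Le cas $n\ge 6$ se trouve exclu puisque $d=1$ entraîne $N=0$, contredisant $N\ge\tilde n-3=n-3\ge 3$. Le point le plus délicat me semble être le calcul précis des contributions à $R$, notamment la vérification que les ramifications au-dessus des singularités non apparentes de $E$ ne dégradent pas la minoration $R\ge N$ obtenue à partir des seules valeurs régulières.
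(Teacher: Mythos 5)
Votre preuve est correcte et suit pour l'essentiel la d\'emarche du texte : Riemann--Hurwitz orbifolde plus la contribution $R\ge N\ge 3\tilde g-3+\tilde n$ des ramifications libres et la majoration $-\chi(\tilde X,\underline p(\tilde E))\le 2\tilde g-2+\tilde n$ donnent $d\cdot(-\chi(X,\underline p(E)))\le 1-\tilde g$, puis l'hyperbolicit\'e issue de la proposition pr\'ec\'edente force $\tilde g=g=0$ et les minorations $\frac{1}{42},\frac16,\frac12,1$ de la table fournissent les bornes $42$, $6$, $2$ et l'exclusion de $n\ge 6$. La seule diff\'erence est de pure comptabilit\'e : vous travaillez directement avec la structure enti\`ere sous-jacente $\underline p$ via le d\'efaut $R\ge 0$ (cons\'equence de $\underline p(\phi^*E)\ge\phi^*\underline p(E)$), l\`a o\`u le texte utilise la structure g\'en\'eralis\'ee $p(E)$ et se ram\`ene \`a l'\'equation uniformisante avant d'invoquer la m\^eme table.
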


\begin{proof}
On a (en supposant au pire des cas $\tilde p_1=\cdots=\tilde p_{\tilde n}=\infty$ pour la structure orbifolde de $\tilde E$)
$$-\chi(\phi^*E)\ge 2\tilde g-2+\tilde n-N\ge 1-\tilde g,$$
et par ailleurs 
$$\chi(\phi^*E)=\deg(\phi)\cdot\chi(E).$$
Puisque l'on veut une d\'eformation non \'el\'ementaire, on veut $\chi(E)<0$ ;
donc $\tilde g=0$ (et donc $g=0$ aussi) et 
$$-\deg(\phi)\cdot\chi(E)\le 1.$$
\'Evidemment, $\deg(\phi)\ge2$ (sinon on n'a pas
de d\'eformation) et il vient 
$$-\chi(E)\le \frac{1}{2}.$$
Maintenant, \'etant donn\'e un couple $(E,\phi)$ satisfaisant aux $3$ conditions pr\'ec\'edentes, on consid\`ere la structure orbifolde sous-jacente, puis l'\'equation fuchsienne uniformisante correspondante $E'$. Alors $(E',\phi)$ satisfait les m\^eme conditions que $(E,\phi)$.
Par ailleurs, si $E'$ satisfait les conclusions de la proposition, il en va de m\^eme de $(E,\phi)$. Donc on suppose dans la suite $E$ uniformisante. On conclut avec les estimations de la section pr\'ec\'edente.
\end{proof}

Les estimations obtenues sont encore tr\`es grossi\`eres. Examinons en d\'etail les cas o\`u $E$ a $4$ ou $5$ singularit\'es non apparentes.
\begin{prop}
Sous les hypoth\`eses pr\'ec\'edentes, $E$ a au plus $3$ singularit\'es.
\end{prop}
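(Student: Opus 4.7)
The plan is to argue by contradiction, assuming $n\in\{4,5\}$, and to derive two opposing counts on the number of \emph{apparent} pre-images of $\phi$ above the singular locus of $E$. The previous proposition gives $g=\tilde g=0$ (so the deformation hypothesis becomes $N\ge \tilde n-3$), and by the same reduction used in its proof (replace $E$ by the uniformising equation of its underlying orbifold $\underline{p}(E)$) I may assume that each non-apparent singularity $t_i$ of $E$ carries an integer weight $p_i\ge 2$.

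For each $i=1,\ldots,n$, split the fibre $\phi^{-1}(t_i)$ into pre-images at which the ramification index is a multiple of $p_i$ (these are apparent singularities of $\tilde E$ by the pull-back formula $\phi^{*}p(t)=p(\phi(t))/\mathrm{Ind}_\phi(t)$), and the remaining non-apparent ones; call their numbers $a_i$ and $\tilde n_i$ respectively, and set $m_i=a_i+\tilde n_i$. Then $\tilde n=\sum_i \tilde n_i$ and the ramification divisor of $\phi$ above $t_i$ has degree $d-m_i$.

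Riemann--Hurwitz on $\mathbb{P}^1$ reads
\[
\sum_{i=1}^n (d-m_i) + R_{\mathrm{free}} = 2d-2,
\]
where $R_{\mathrm{free}}$ counts ramification above the $N$ free critical values, hence satisfies $R_{\mathrm{free}}\ge N\ge \tilde n-3$. Substituting the expressions for $m_i$ and $\tilde n$ collapses this into
\[
\sum_{i=1}^n a_i \ge (n-2)d -1.
\]
Going the other way, each of the $a_i$ apparent pre-images above $t_i$ has ramification index $\ge p_i$, so contributes at least $a_i(p_i-1)$ to the first term above, which is bounded by $2d-2$; hence
\[
\sum_{i=1}^n a_i(p_i-1) \le 2d-2.
\]
Using $p_i\ge 2$ in the second bound yields $\sum_i a_i\le 2d-2$, and comparison with the first bound gives $(n-4)d\le -1$, which is impossible as soon as $n\ge 4$ and $d\ge 1$.

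The only subtlety I foresee is the identification of apparent singularities of $\tilde E$ with the divisibility condition $p_i\mid \mathrm{Ind}_\phi(t)$, which follows from $\underline{p}(\tilde E)\ge \phi^{*}\underline{p}(E)$ together with the fact that $\underline{p}$ detects the order of local monodromy. Once this is in hand, the rest is a pure ramification count on $\mathbb{P}^1$, with no case analysis on the weights $p_i$ needed.
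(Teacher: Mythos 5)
Your argument is correct, but it takes a genuinely different route from the paper's. The paper argues via the orbifold Euler characteristic: assuming $E$ uniformising and hyperbolic, it bounds $-\chi(E)\ge \frac{1}{2}-\frac{1}{p}$ (with $p$ the largest weight) and $-\chi(\phi^*E)\le 1-\frac{\tilde n}{p}$, deduces $d\le 2\frac{p-\tilde n}{p-2}$, and is then left with a residual case $p=\infty$, $d=2$ which it excludes by a direct check, the case $n=5$ being dispatched ``by similar arguments''. You instead count the apparent pre-images of the $t_i$ in two ways: Riemann--Hurwitz combined with the hypotheses $\tilde g=0$ and $N\ge \tilde n-3$ forces $\sum_i a_i\ge (n-2)d-1$, while the fact that an apparent pre-image above $t_i$ carries ramification index at least $p_i\ge 2$ gives $\sum_i a_i\le 2d-2$, and the comparison $(n-4)d\le -1$ kills every $n\ge 4$ at once. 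What this buys is uniformity: no case analysis on the weights, no hyperbolicity needed beyond $p_i\ge 2$ at non-apparent points, and no residual configuration to exclude by hand; the paper's approach, on the other hand, stays within the orbifold formalism it has just set up and reuses it verbatim in the later sections. Two points of hygiene: your Riemann--Hurwitz display should be an inequality (or $R_{\mathrm{free}}$ should denote \emph{all} ramification outside the fibres over the $t_i$), though the direction you actually use is the correct one; and of the divisibility criterion only the easy implication is needed, namely that a pre-image whose index is not a multiple of $p_i$ has non-trivial local monodromy and hence is non-apparent, so that $\tilde n\ge\sum_i\tilde n_i$ --- the argument survives even if some of the $a_i$ points were not apparent.
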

\begin{proof}
Comme dans la preuve de la pr\'ec\'edente proposition, on suppose sans perte de g\'en\'eralit\'e $E$ uniformisante et hyperbolique.
Si $n=n(E)=4$, soit $p$ le maximum de sa structure orbifolde :
$$-\chi(E)\ge \left(-2+\sum_1^3(1-\frac{1}{2})+(1-\frac{1}{p})\right)=\left(\frac{1}{2}-\frac{1}{p}\right).$$
Alors $p$ borne la structure orbifolde de $\tilde E=\phi^*E$ et on a 
$$-\chi(\tilde E) \le \left(-2+\sum_1^{\tilde n}(1-\frac{1}{p})-(\tilde n-3)\right)=1-\frac{\tilde n}{p}.$$
Il vient 
$$d\left(\frac{1}{2}-\frac{1}{p}\right)\le 1-\frac{\tilde n}{p}$$
et donc
$$d\le2\frac{p-\tilde n}{p-2}.$$
Bien s\^ur, on veut $\tilde n>3$ et $d>1$ pour obtenir une d\'eformation non triviale ; la seule possibilit\'e reste donc $p=\infty$ et $d=2$. On v\'erifie ais\'ement qu'il n'y a pas de d\'eformation compl\`ete dans ce cas. Avec des arguments similaires, on exclut le cas $n=5$.
\end{proof}

\section{Solutions alg\'ebriques non \'el\'ementaires pour le syst\`eme de rang $2$}
Ici, on suppose que $E$ est hyperg\'eom\'etrique, uniformisante hyperbolique, avec $3$ p\^oles en $x=0,1,\infty$, que $\tilde E=\phi^*E$ a exactement $5$ singularit\'es non apparentes et que $$\phi:\mathbb P^1\to\mathbb P^1$$ de degr\'e $d=\deg(\phi)$ ramifie au dessus de $2$ points distincts des p\^oles de $E$.
 \begin{prop}\label{mod}
 Sous les hypoth\`eses pr\'ec\'edentes, notons $(p_0,p_1,p_\infty)$ la structure orbifolde de $E$, avec $2\le p_0\le p_1\le p_\infty\le\infty$ et $d:=\deg(\phi)$. Alors on a 
 \begin{equation}\label{mod2}
 d\left(1-\frac{1}{p_0}-\frac{1}{p_1}-\frac{1}{p_{\infty}}\right)\le 1-\frac{5}{p_{\infty}}.
 \end{equation}
\end{prop}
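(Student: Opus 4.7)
The plan is to compute $-\chi(\tilde X,\tilde p)$, where $\tilde p = \phi^*p(E) = p(\tilde E)$, in two different ways: once via the orbifold Riemann--Hurwitz formula, and once directly from the definition of $\chi$ on the genus-zero curve $\tilde X=\mathbb{P}^1$. Equating the two expressions and bounding the resulting sum will give (\ref{mod2}).

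First, since $E$ has orbifold structure $(p_0,p_1,p_\infty)$ on $\mathbb{P}^1$, one has $-\chi(E) = 1 - 1/p_0 - 1/p_1 - 1/p_\infty$, and by the orbifold Riemann--Hurwitz formula recalled in Section~2,
$$-\chi(\tilde X,\tilde p) \;=\; -d\,\chi(E) \;=\; d\Bigl(1 - \frac{1}{p_0} - \frac{1}{p_1} - \frac{1}{p_\infty}\Bigr),$$
i.e.\ exactly the left-hand side of (\ref{mod2}). On the other hand, because $\tilde g = 0$,
$$-\chi(\tilde X,\tilde p) \;=\; -2 + \sum_{t \in \mathrm{supp}(\tilde p)} \Bigl(1 - \frac{1}{\tilde p(t)}\Bigr).$$
The heart of the proof is therefore to show that this sum is at most $3-5/p_\infty$.

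I would split the support of $\tilde p$ into two disjoint sets: the set $A$ of the five non-apparent singularities of $\tilde E$, and the set $B$ of all the remaining support points. A point $t\in A$ lies over some $x\in\{0,1,\infty\}$ and satisfies $\tilde p(t) = p_x/k_t \le p_x \le p_\infty$, so $1-1/\tilde p(t) \le 1-1/p_\infty$ and hence $\sum_{A} \le 5(1-1/p_\infty)$. A point $t\in B$ is an apparent singularity of $\tilde E$ by the very construction of $\tilde p = \phi^*p(E)$: either it lies over a regular point of $E$, in which case $\tilde p(t) = 1/k_t$, or it lies over some $x\in\{0,1,\infty\}$ with $k_t = m\,p_x$ for an integer $m\ge 2$, in which case $\tilde p(t) = 1/m$. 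In either case $\tilde p(t)<1$, so every term of $\sum_{B}$ is non-positive.

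The last step uses the hypothesis that $\phi$ has two critical values $v_1,v_2$ outside $\{0,1,\infty\}$. Above each $v_i$ the ramified preimages belong to $B$ (first case) and contribute $\sum_{\phi(t)=v_i}(1-k_t) = -(d-|\phi^{-1}(v_i)|) \le -1$ to $\sum_{B}$; the other terms of $\sum_{B}$ being $\le 0$, this yields $\sum_{B} \le -2$. Combining,
$$-\chi(\tilde X,\tilde p) \;\le\; -2 + 5\Bigl(1 - \frac{1}{p_\infty}\Bigr) - 2 \;=\; 1 - \frac{5}{p_\infty},$$
which is precisely (\ref{mod2}). The one subtle point, which is where the argument is easy to get wrong, is the bookkeeping of $\mathrm{supp}(\tilde p)$: one must remember that apparent singularities of $\tilde E$ coming from ramification of $\phi$ (whether away from the poles of $E$ or as ``extra'' ramification above them) are still in the support of the \emph{generalized} orbifold structure $\phi^*p(E)$ with weight strictly less than $1$, and that systematically exploiting their negative contribution is what sharpens the naive bound $-\chi \le 3-5/p_\infty$, obtained from $A$ alone, into the claimed (\ref{mod2}).
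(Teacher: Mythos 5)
Your proof is correct and follows essentially the same route as the paper: both sides compute $-\chi$ of the pulled-back orbifold structure, using orbifold Riemann--Hurwitz for the lower side and bounding the upper side by the five non-apparent singularities (each $\le 1-\frac{1}{p_\infty}$) together with an extra $-2$ coming from the ramification over the two critical values away from $\{0,1,\infty\}$. Your explicit check that all remaining apparent singularities contribute non-positively merely spells out bookkeeping the paper leaves implicit.
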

\begin{proof}
Si on note $(\tilde p_1,\tilde p_2,\tilde p_3,\tilde p_4,\tilde p_5)$ la structure orbifolde sous-jacente de $\tilde E$, avec $2\le\tilde p_1\le\tilde p_2\le\tilde p_3\le\tilde p_4\le\tilde p_5\le\infty$, alors on a $\tilde p_5\le p_\infty$ : les singularit\'es non apparentes en haut proviennent de celles d'en bas ; leurs exposants sont des multiples de ceux d'en bas. On a ainsi
$$-\chi(\tilde E)\le 2\tilde g-2+\sum_{i=1}^5(1-\frac{1}{\tilde{p_i}})-2=1-\frac{1}{\tilde{p_1}}-\frac{1}{\tilde{p_2}}-\frac{1}{\tilde{p_3}}-\frac{1}{\tilde{p_4}}-\frac{1}{\tilde{p_5}}\le1-\frac{5}{p_\infty}.$$
Par ailleurs
$$d\left(1-\frac{1}{p_0}-\frac{1}{p_1}-\frac{1}{p_\infty}\right)=-d\cdot\chi(E)\le-\chi(\tilde E)$$
d'o\`u l'in\'egalit\'e (\ref{mod2}). 
\end{proof}
\begin{prop}\label{ty}
On est toujours sous les hypoth\`eses pr\'ec\'edentes.
 \begin{enumerate}
 \item On a l'\'egalit\'e suivante : 
 \begin{equation}\label{mod1}
d-\lfloor\frac{d}{p_0}\rfloor-\lfloor\frac{d}{p_1}\rfloor-\lfloor\frac{d}{p_{\infty}}\rfloor=1
\end{equation}
o\`u $\lfloor\rfloor$ d\'esigne la partie enti\`ere.
\item Si $p_0\le  d<p_1\le p_{\infty}$ alors 
 \begin{equation}\label{mod3}\frac{1}{p_0}+\frac{1}{d}\ge 1.\end{equation}
 \item Si $p_0\le p_1\le d<p_{\infty}$ alors 
 \begin{equation}\label{mod3bis}\frac{1}{p_0}+\frac{1}{p_1}+\frac{1}{d}\ge 1.\end{equation}
\item Si $p_{\infty}\le d$ alors 
\begin{equation}\label{mod4} \frac{4}{5}\le\frac{1}{p_0}+\frac{1}{p_{1}}<1. \end{equation}
\end{enumerate}
\end{prop}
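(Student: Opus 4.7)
The plan is to establish the arithmetic identity~(\ref{mod1}) as a rigidity consequence of the Hurwitz data, then deduce~(2)--(4) as purely algebraic corollaries.

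For~(\ref{mod1}), I count the non-apparent singularities of $\tilde E=\phi^*E$. Above a pole $t\in\{0,1,\infty\}$ of $E$ with weight $p_t$, a preimage with local index $k$ is apparent in $\tilde E$ iff $p_t\mid k$ (the pulled-back exponent $k\theta_t$ is then integral), and non-apparent otherwise; above the two extra branch points $t_1,t_2$ of $\phi$, every preimage has trivial underlying weight and is therefore regular or apparent. Writing $r_t$ for the number of $\phi$-preimages of $t$ and $a_t$ for the apparent ones, the hypothesis ``$\tilde E$ has exactly $5$ non-apparent singularities'' reads $\sum_t(r_t-a_t)=5$. Classical Riemann--Hurwitz for the degree-$d$ cover $\phi:\mathbb P^1\to\mathbb P^1$, whose branch locus is contained in $\{0,1,\infty,t_1,t_2\}$, gives $\sum_t r_t+r_{t_1}+r_{t_2}=3d+2$, whence
\[
\sum_t a_t+r_{t_1}+r_{t_2}=3d-3.
\]
Since $r_{t_j}\le d-1$ (each extra point is actually a branch point) and $a_t\le\lfloor d/p_t\rfloor$ (each apparent preimage eats at least $p_t$ of the degree above $t$), the $2$-parameter Hurwitz deformation hypothesis forces both bounds to be saturated: the extras must carry the simplest profile $(2,1,\ldots,1)$, giving $r_{t_j}=d-1$, and every apparent preimage must have $k=p_t$, giving $a_t=\lfloor d/p_t\rfloor$. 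Substituting yields $\sum_t\lfloor d/p_t\rfloor=d-1$, which is~(\ref{mod1}).

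Parts~(2) and~(3) follow directly. In~(2), $p_1,p_\infty>d$ make $\lfloor d/p_1\rfloor=\lfloor d/p_\infty\rfloor=0$, so~(\ref{mod1}) forces $\lfloor d/p_0\rfloor=d-1$; combined with $\lfloor d/p_0\rfloor\le d/p_0$ this rearranges to $1/p_0+1/d\ge 1$. Part~(3) is analogous, using only $\lfloor d/p_\infty\rfloor=0$ and the bound $\lfloor x\rfloor\le x$. For~(4), the upper bound $1/p_0+1/p_1<1$ is just hyperbolicity of $E$. For the lower bound, applying $\lfloor x\rfloor\le x$ to~(\ref{mod1}) gives $1/p_0+1/p_1+1/p_\infty\ge 1-1/d$, and then $p_\infty\le d$ yields $1/p_0+1/p_1\ge 1-2/p_\infty$, which is $\ge 4/5$ as soon as $p_\infty\ge 10$. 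For the remaining range $p_\infty\le 9$ I would enumerate the hyperbolic triples in Schwarz's list with $1/p_0+1/p_1<4/5$ (that is, $(p_0,p_1)\neq(2,3)$) and eliminate each one using the sharper constraint $\sum_t(d\bmod p_t)\ge 5$, since the five non-apparent preimages each consume at least one unit of the residual degree $d-p_t\lfloor d/p_t\rfloor$ above some pole.

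The delicate point is the tightness in~(\ref{mod1}): turning $r_{t_j}\le d-1$ and $a_t\le\lfloor d/p_t\rfloor$ into equalities. Neither follows from $\sum_t(r_t-a_t)=5$ alone, and one must invoke the full $2$-parameter deformation hypothesis, tracking the dimension of the Hurwitz stratum for genus-$0$ covers with prescribed ramification profiles in order to rule out any excess ramification at $t_1,t_2$ or any apparent preimage with $k>p_t$ that would obstruct the free deformation.
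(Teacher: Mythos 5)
Your counting for (\ref{mod1}) only establishes one of the two required inequalities. From $\sum_t a_t+r_{t_1}+r_{t_2}=3d-3$ together with the bounds $a_t\le\lfloor d/p_t\rfloor$ and $r_{t_j}\le d-1$ you rigorously obtain $\lfloor\frac{d}{p_0}\rfloor+\lfloor\frac{d}{p_1}\rfloor+\lfloor\frac{d}{p_\infty}\rfloor\ge d-1$, i.e. $d-\sum_t\lfloor\frac{d}{p_t}\rfloor\le 1$ (this is the same Riemann--Hurwitz count as in the paper, where it appears as an inequality since only ``at least two'' free branch points are assumed). The reverse inequality is exactly the saturation you yourself flag as the delicate point, and the mechanism you propose to force it --- tracking the dimension of the Hurwitz stratum to rule out excess ramification at $t_1,t_2$ or apparent preimages of index $>p_t$ --- does not work: a cover whose profile over a free critical value is, say, $(2,2,1,\dots,1)$ still moves in a two-parameter Hurwitz family by letting the two critical values vary, so nothing in the deformation hypothesis is obstructed. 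What actually closes the argument, and is the paper's one-line step, is hyperbolicity of $E$: $\sum_t\lfloor\frac{d}{p_t}\rfloor\le d\left(\frac{1}{p_0}+\frac{1}{p_1}+\frac{1}{p_\infty}\right)<d$, hence $\sum_t\lfloor\frac{d}{p_t}\rfloor\le d-1$ by integrality. This gives the missing inequality and, plugged back into your count, forces the saturation $a_t=\lfloor d/p_t\rfloor$, $r_{t_j}=d-1$ (which is precisely the Remark following the proposition). So the gap in part (1) is real, but it is filled by an elementary estimate you never invoke, not by deformation theory.

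Parts (2) and (3) are correct and essentially identical to the paper. For part (4), your lower bound is complete only when $p_\infty\ge 10$; for $p_\infty\le 9$ you defer to an enumeration (``I would enumerate\dots'') that is not carried out, so $\frac{1}{p_0}+\frac{1}{p_1}\ge\frac{4}{5}$ is not actually proved in that range, and the proposed elimination criterion $\sum_t(d\bmod p_t)\ge 5$ would itself require the saturation from part (1) plus a case-by-case check over all admissible $d$. The paper avoids any enumeration: since $p_\infty\le d$ and $E$ is hyperbolic, one may replace $d$ by $p_\infty$ in (\ref{mod2}) (Proposition \ref{mod}, which is at your disposal but which you never use here) to get $\left(1-\frac{1}{p_0}-\frac{1}{p_1}\right)p_\infty^2-2p_\infty+5\le 0$; for this quadratic in $p_\infty$ with positive leading coefficient to take a nonpositive value its discriminant must be $\ge 0$, which is exactly $\frac{1}{p_0}+\frac{1}{p_1}\ge\frac{4}{5}$, uniformly in $p_\infty$. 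Using (\ref{mod2}) would have spared both the case split at $p_\infty=10$ and the missing finite check.
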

\begin{proof}D'apr\`es Riemann-Hurwitz, $\phi$ poss\`ede $2d-2$ points de ramification compt\'es avec multiplicit\'e dont $2$ au moins s'envoient en dehors de $x=0,1,\infty$. Le nombre de points au dessus de $x=0,1,\infty$ est donc minor\'e par 
$$\#\phi^{-1}(\{0,1,\infty\})\ge 3d-(2d-2)+2=d+4.$$
Le nombre de singularit\'es apparentes de $\tilde E$ au dessus de $x=i$ est major\'e par $\lfloor\frac{d}{p_i}\rfloor$, $i=0,1,\infty$. Ainsi, le nombre de singularit\'es non apparentes de $\tilde E$ (c'est \`a dire $5$) est minor\'e par 
$$5\ge d+4-\lfloor\frac{d}{p_0}\rfloor-\lfloor\frac{d}{p_1}\rfloor-\lfloor\frac{d}{p_{\infty}}\rfloor.$$
D'un autre c\^ot\'e, $\lfloor\frac{d}{p_i}\rfloor\le\frac{d}{p_i}$ et on a
$$d+4-\lfloor\frac{d}{p_0}\rfloor-\lfloor\frac{d}{p_1}\rfloor-\lfloor\frac{d}{p_{\infty}}\rfloor\ge 
d+4-\frac{d}{p_0}-\frac{d}{p_1}-\frac{d}{p_{\infty}}\ge 4+d(1-\frac{1}{p_0}-\frac{1}{p_1}-\frac{1}{p_{\infty}})>4$$
($E$ est hyperbolique) et puisque le terme de gauche est entier, on obtient 
$$d+4-\lfloor\frac{d}{p_0}\rfloor-\lfloor\frac{d}{p_1}\rfloor-\lfloor\frac{d}{p_{\infty}}\rfloor\ge 5$$
d'o\`u l'\'egalit\'e (\ref{mod1}).
Si $p_{\infty}>d$, on a $\lfloor\frac{d}{p_{\infty}}\rfloor=0$ et l'\'equation (\ref{mod1}) devient $d-\lfloor\frac{d}{p_0}\rfloor-\lfloor\frac{d}{p_1}\rfloor=1$. On sait que $d(1-\frac{1}{p_0}-\frac{1}{p_1})\le d-\lfloor\frac{d}{p_0}\rfloor-\lfloor\frac{d}{p_1}\rfloor=1$ alors on a $\frac{1}{p_0}+\frac{1}{p_1}+\frac{1}{d}\ge 1$.

Si $p_{\infty}\le d$, d'apr\`es l'hyperbolicit\'e de $E$, on a $\frac{1}{p_0}+\frac{1}{p_{1}}<1$. On remplace $d$ par $p_{\infty}$ dans l'in\'egalit\'e (\ref{mod2}) et on obtient l'in\'egalit\'e $(1-\frac{1}{p_0}-\frac{1}{p_1})p^2_{\infty}-2p_{\infty}+5\le 0$.  Pour qu'elle admette une solution, son discriminant doit \^etre sup\'erieur ou \'egal \`a $0$. 
Ainsi on a $\frac{1}{p_0}+\frac{1}{p_1}\ge \frac{4}{5}$.  
\end{proof}

\begin{rema}De la preuve de l'\'egalit\'e (\ref{mod1}), on d\'eduit que $\phi$ r\'ealise n\'ecessairement le nombre maximum possible de points singuliers apparents pour $\tilde E$ au dessus de $x=0,1,\infty$. Autrement dit, au dessus de $x=0$ par exemple, $\phi$ a exactement $\lfloor\frac{d}{p_0}\rfloor$ points ramifiant \`a l'ordre $p_0-1$ (d'indice $p_0$).
\end{rema}
Examinons d'abord le cas $d<p_{\infty}$. Les in\'egalit\'es  (\ref{mod3}) et (\ref{mod3bis}) nous donnent les possibilit\'es suivantes :
\begin{center}
\begin{tabular}{|c|c|}
\hline 
Triplets $(p_0, p_1, p_{\infty})$& Degr\'es de rev\^etement $d$\\
\hline
$(2, p_1, p_{\infty})$& $2$\\
\hline
$(2,3, p_{\infty})$& $3$, $4$, $5$, $6$\\
\hline
$(2,4, p_{\infty})$ & $4$\\
\hline
$(3,3, p_{\infty})$ & $3$\\
\hline
\end{tabular}
\end{center}
Les param\`etres $p_1$ et $p_{\infty}$ du tableau sont des entiers naturels suffisamment grands pour  v\'erifier la condition d'hyperbolicit\'e, par hypoth\`ese $>d$, ou infinis. Puisque ces param\`etres n'interviennent pas dans les contraintes de la construction, on peut tout aussi bien les fixer \`a $\infty$, ce que nous ferons ensuite.
Dans chacun des cas, on v\'erifie si la condition (\ref{mod1}) est satisfaite ; c'est toujours le cas
sauf pour le rev\^etement de degr\'e $5$. En tenant compte du fait que $\phi$ doit avoir le nombre maximal de points singuliers apparents (voir remarque pr\'ec\'edente), on trouve la liste suivante
\begin{table}[htdp]
\begin{center}
\begin{tabular}{|c|c|c|c|}
\hline 
Triplets $(p_0, p_1,p_{\infty})$& Degr\'es $d$& Type de ramifications & $N$ \\
\hline
$(2,\infty,\infty)$ & $2$ & $(2 ; 1+1 ; 1+1)$ & $1$ \\
\hline
 & $3$ & $(2+1 ; 3 ; 1+1+1)$ & $1$ \\
$(2,3,\infty)$ & $4$ & $(2+2 ; 3+1 ; 1+1+1+1)$ & $2$ \\
 & $6$ & $(2+2+2 ; 3+3 ; 1+1+1+1+1+1)$ & $3$ \\
\hline 
$(2,4,\infty)$ & $4$ & $(2+2 ; 4 ; 1+1+1+1)$ & $1$ \\
\hline
$(3,3,\infty)$ & $3$ & $(3 ; 3 ; 1+1+1)$ & $0$ \\
\hline
\end{tabular}
\end{center}
\end{table}%

Par exemple, pour la ligne $3$, les ramifications au dessus de $x=0,1$ sont impos\'ees et il nous reste $2$ points de ramification que l'on peut choisir fixe au dessus de $x=\infty$ ou libre ; dans chacun des cas,  la condition (\ref{mod1}) nous assure que la d\'eformation 
est compl\`ete. On a syst\'ematiquement privil\'egi\'e de maximiser le nombre de points libres dans le tableau, les autres cas de figure s'en d\'eduisent facilement. Par exemple, toujours pour la ligne $3$, on a le cas d\'eg\'en\'er\'e de type Painlev\'e VI $(2+2 ; 3+1 ; 2+1+1)$.
Seules les lignes $3$ et $4$ nous fournissent des solutions possibles pour les syst\`emes de Garnier avec $N>1$.
Examinons maintenant le cas $p_0\le p_1\le p_\infty\le d$. L'in\'egalit\'e (\ref{mod4}) nous dit que $(p_0,p_1)=(2,3)$ puis en utilisant (\ref{mod2}) et le fait que $p_\infty\le d$, on trouve la liste suivante
\begin{center}
\begin{tabular}{|c|c|}
\hline
Triplets $(p_0, p_1, p_{\infty})$& Degr\'es de rev\^etement $d$\\
\hline
$(2,3,7)$ & $7$, $8$, $9$, $10$, $11$, $12$\\
\hline
$(2, 3, 8)$ & $8$, $9$\\
\hline
\end{tabular}
\end{center}
Seul le rev\^etement de degr\'e $11$ ne satisfait pas l'\'egalit\'e (\ref{mod1}). Pour les autres, on trouve
\begin{table}[htdp]
\begin{center}
\begin{tabular}{|c|c|c|c|}
\hline 
Triplets $(p_0, p_1,p_{\infty})$& Degr\'es $d$& Type de ramifications & $N$ \\
\hline
 & $7$ & $(2+2+2+1 ; 3+3+1 ; 7)$ & $-1$ \\
 & $8$ & $(2+2+2+2 ; 3+3+1+1 ; 7+1)$ & $0$ \\
$(2,3,7)$ & $9$ & $(2+2+2+2+1 ; 3+3+3 ; 7+1+1)$ & $0$ \\
 & $10$ & $(2+2+2+2+2 ; 3+3+3+1 ; 7+1+1+1)$ & $1$ \\
 & $12$ & $(\underbrace{2+\cdots+2 }_{6 fois} ; 3+3+3+3 ; 7+\underbrace{1+\cdots+1 }_{5 fois})$ & $2$ \\
 \hline 
$(2,3,8)$ & $8$ & $(2+2+2+2 ; 3+3+1+1 ; 8)$ & $-1$ \\
\hline
 & $9$ & $(2+2+2+2+1 ; 3+3+3 ; 8+1)$ & $-1$ \\
\hline
\end{tabular}
\end{center}
\end{table}%
\vspace{4cm}

Les lignes avec $N=-1$ correspondent \`a $\tilde n=2$ singularit\'es en haut : il n'y a certainement pas de  rev\^etement correspondant. Les lignes $N=0$, lorsque le rev\^etement existe, vont nous donner de nouveau une \'equation hyperg\'eom\'etrique en haut. La ligne $N=1$ nous donne une des solutions alg\'ebriques de  Painlev\'e VI dans la liste de Doran. La seule possibilit\'e pour nous est la ligne $5$ avec $N=2$. On a d\'emontr\'e le
\begin{theo}\label{Inver}
Si $E$ est une \'equation hyperg\'eom\'etrique non \'el\'ementaire dont les monodromies locales sont d'ordre $(p_0, p_1,p_{\infty})$
et si $\phi:\mathbb P^1\to\mathbb P^1$ est une famille \`a $2$ param\`etres de rev\^etements ramifi\'es tel que $\phi_t^*E$ a $5$ singularit\'es non apparentes, alors on est dans la liste  :
\begin{table}[htdp]
\begin{center}
\begin{tabular}{|c|c|c|}
\hline 
Triplets $(p_0, p_1,p_{\infty})$& Degr\'es $d$& Type de ramifications\\
\hline
$(2,3,\infty)$ & $4$ & $(2+2 ; 3+1 ; 1+1+1+1)$\\
\hline
$(2,3,\infty)$ & $6$ & $(2+2+2 ; 3+3 ; 2+1+1+1+1)$\\
\hline 
$(2,3,7)$ &$12$ &   $(\underbrace{2+\cdots+2 }_{6 fois}; 3+3+3+3; 7+1+1+1+1+1)$ \\
\hline
\end{tabular}
\end{center}
\caption{Listes Pull-back hyperg\'eom\'etriques}
\label{H}
\end{table}%
\end{theo}

Dans le tableau,  $(p_0, p_1,p_{\infty})$ d\'esigne l'ordre local des monodromies de l'hyperg\'eom\'etrique $E$ aux point singuliers $0$, $1$ et $\infty$, la seconde colonne donne le degr\'e de $\phi_t$ et la colonne de droite, la partition des fibres de $\phi_t$ au dessus de $0$, $1$ et $\infty$.
\begin{rema}  
Dans ce th\'eor\`eme, on constate qu'il n'existe pas de pull-back \`a $2$ param\`etres libres de triangles hyperboliques $(2,\infty, \infty)$, $(2,3,8)$, $(2,4,p_{\infty})$ et $(3, 3,p_{\infty})$ o\`u $p_{\infty}$ est un entier naturel tr\`es grand. 
Les triangles hyperboliques $(2,3, \infty)$ et $(2,3,8)$ admettent des pull-back \`a $1$ param\`etre de degr\'es $3$ et $9$ respectivement ; on a
\begin{description}
\item[Pour $d=3$] $(2,3, \infty)$; $(2+1; 1+1+1; 3)$
 \item[Pour $d=9$] $(2,3,8)$ ; $(2+2+2+2+1 ; 3+3+1+1+1; 8+1)$.
 \end{description}
Ceci construit des solutions alg\'ebriques incompl\`etes.
\end{rema}
\subsection{Existence des rev\^etements}
Le tableau des ramifications \ref{H} satisfait la formule de Riemann-Hurwitz, ce qui ne suffit pas \`a montrer l'existence des rev\^etements ramifi\'es correspondant. Dans la derni\`ere section, on donnera des expressions explicites pour le rev\^etement concernant la lignes 1 du tableau.
Pour la 3ième ligne, les calculs sont trop compliqu\'es ; on d\'emontre l'existence comme suit.

Il faut construire le rev\^etement topologique $\phi:\mathbb  S^2\to \mathbb P^1$ : il existera alors une unique structure complexe sur la sph\`ere $\mathbb S^2$ rendant la fl\^eche holomorphe, et donc rationnelle. L'existence d'un rev\^etement topologique se d\'emontre en construisant une repr\'esentation 
$$\pi_1(\mathbb P^1\setminus\{\text{valeurs critiques}\})\to\mathrm{Perm}\{1,\ldots,d\}$$
o\`u $d$ est le degr\'e du rev\^etement : on le construit via sa monodromie.
En fait, le tableau de ramification compl\`ete est 
$$ (\underbrace{2+\cdots+2 }_{6 fois}\ ;\ 3+3+3+3 ; 7+1+1+1+1+1 \ ;\  \underbrace{1+\cdots+1 }_{10 fois}+2 \ ;\ \underbrace{1+\cdots+1 }_{10 fois}+2)$$
au dessus des points critiques
$$(0,1,\infty,\lambda_1,\lambda_2).$$
Il faut donc trouver des permutations $\sigma_0,\sigma_1,\sigma_\infty,\tau_1,\tau_2\in\mathrm{Perm}\{1,\ldots,d\}$
conjugu\'ees individuellement \`a 
$$\sigma_0\sim(12)(34)(56)(78)(9\ 10)(11\ 12),\ \ \ \sigma_1\sim(123)(456)(789)(10\ 11\ 12),\ \ \ \sigma_\infty\sim(1234567)$$
$$\tau_1\sim\tau_2\sim (12)\ \ \ \text{avec}\ \ \ \sigma_0\circ\sigma_1\circ\sigma_\infty\circ\tau_1\circ\tau_2=\text{identit\'e}.$$
En fait, on trouve chez Doran et Kitaev une famille \`a $1$ param\`etre de rev\^etements ramifi\'es du type
$$ (\underbrace{2+\cdots+2 }_{6 fois}\ ;\ 3+3+3+3 ; 7+2+1+1+1 \ ;\  \underbrace{1+\cdots+1 }_{10 fois}+2)$$
C'est une sous-famille de celle que nous cherchons correspondant \`a une confluence : une des deux ramifications
libres, disons $\lambda_1$, est \`a l'infini. Correspond alors un rev\^etement topologique et une repr\'esentation
donn\'ee par 
$$\sigma_0'\sim(12)(34)(56)(78)(9\ 10)(11\ 12),\ \ \ \sigma_1'\sim(123)(456)(789)(10\ 11\ 12),\ \ \ \sigma_\infty'\sim(1234567)(89)$$
$$\tau_2'\sim (12)\ \ \ \text{avec}\ \ \ \sigma_0'\circ\sigma_1'\circ\sigma_\infty'\circ\tau_2'=\text{identit\'e}.$$
Puisque $(1234567)(89)=(1234567)\circ(89)$, il est clair que l'on peut d\'ecomposer $\sigma_\infty'=\sigma_\infty\circ\tau_1$, ce qui montre l'existence du rev\^etement.

\subsection{Exposants $(\tilde{\theta}_{0}, \tilde{\theta}_{1}, \tilde{\theta}_{t_1}, \tilde{\theta}_{t_2}, \tilde{\theta}_{\infty})$}
On a travaill\'e jusque maintenant avec la structure orbifolde sous-jacente à l'\'equation $E$. Pour chaque type
de rev\^etement list\'e dans le th\'eor\`eme \ref{Inver}, on peut choisir pour $E$ toute \'equation dont la structure
orbifolde sous-jacente est donn\'ee par $(p_0, p_1,p_{\infty})$. 

Par exemple, si on consid\`ere la premi\`ere ligne du tableau \ref{H}, en prenant pour $E$ l'\'equation hyperg\'eom\'etrique d'exposants $(\frac{1}{2},\frac{1}{3},\theta_\infty)$, $\theta_\infty\in\mathbb C$ arbitraire, on aura apr\`es
pull-back une famille isomonodromique d'\'equations fuchsiennes avec $5$ p\^oles non apparents d'exposants
$$(\frac{1}{3},\theta_\infty,\theta_\infty,\theta_\infty,\theta_\infty)$$
et deux p\^oles apparents d'exposant $2$.
Si, par contre, on part de l'\'equation hyperg\'eom\'etrique d'exposants $(\frac{1}{2},\frac{2}{3},\theta_\infty)$,
on obtiendra alors une famille d'\'equations fuchsiennes avec $5$ p\^oles non apparents d'exposants
$$(\frac{2}{3},\theta_\infty,\theta_\infty,\theta_\infty,\theta_\infty)$$
et trois p\^oles apparents d'exposant $2$. Pourtant, les deux familles ainsi obtenues sont reli\'ees par une
\'equivalence de jauge birationnelle. En fait, on peut voir cette \'equivalence en bas, sur les hyperg\'eom\'etriques :
$$(\frac{1}{2},\frac{2}{3},\theta_\infty)=(1-\frac{1}{2},1-\frac{1}{3},\theta_\infty)\sim(\frac{1}{2},\frac{1}{3},\theta_\infty).$$
Cette \'equivalence de jauge se rel\`eve en une \'equivalence de jauge entre les deux familles isomonodromiques. Il nous faut donc classer les \'equations fuchsiennes pour chacune des $3$ orbifoldes uniformisantes donn\'ee par le tableau  \ref{H} modulo \'equivalence de jauge birationnelle. On trouve le tableau suivant :
\begin{table}[htdp]
\begin{center}
\begin{tabular}{|c|c|c|}
\hline 
Degr\'es & Exposants de $E$& Exposants de $E'$\\
\hline

$3$& $(\frac{1}{2}, \frac{1}{3},\theta_{\infty})$ & $(\frac{1}{2}, \frac{1}{3},\frac{1}{3},\frac{1}{3},3\theta_{\infty})$\\

\hline

$4$ & $(\frac{1}{2}, \frac{1}{3},\theta_{\infty})$ &$(\frac{1}{3},\theta_{\infty},\theta_{\infty},\theta_{\infty},\theta_{\infty})$\\

\hline

$6$ & $(\frac{1}{2}, \frac{1}{3},\theta_{\infty})$ &$(2\theta_{\infty},\theta_{\infty},\theta_{\infty},\theta_{\infty},\theta_{\infty})$\\

\hline

$12$& $(\frac{1}{2}, \frac{1}{3},\frac{1}{7})$ & $(\frac{1}{7}, \frac{1}{7},\frac{1}{7},\frac{1}{7},\frac{1}{7})$\\

& $(\frac{1}{2}, \frac{1}{3},\frac{2}{7})$ & $(\frac{2}{7}, \frac{2}{7},\frac{2}{7},\frac{2}{7},\frac{2}{7})$\\


& $(\frac{1}{2}, \frac{1}{3},\frac{3}{7})$ & $(\frac{3}{7}, \frac{3}{7},\frac{3}{7},\frac{3}{7},\frac{3}{7})$\\

\hline

$9$ & $(\frac{1}{2}, \frac{1}{3},\frac{1}{8})$&$(\frac{1}{2}, \frac{1}{3},\frac{1}{3},\frac{1}{3},\frac{1}{8})$\\


 & $(\frac{1}{2}, \frac{1}{3},\frac{3}{8})$&$(\frac{1}{2}, \frac{1}{3},\frac{1}{3},\frac{1}{3},\frac{3}{8})$\\
\hline
\end{tabular}
\end{center}
\caption{Exposants de l'\'equation $\phi_t^*E$}
\label{Mono}
\end{table}

\section{Solutions alg\'ebriques du syst\`eme de Garnier d'ordre $3$}
On s'int\'eresse maintenant au cas $N=3$ c'est \`a dire l'application $\phi:\Bbb P^1_x\to \Bbb P^1_z$ a trois points critiques libres distincts en dehors sur des valeurs critiques $z=0$, $1$ et en $\infty$. Comme la section pr\'ec\'edente elle sera d\'efinie par une famille de rev\^etements param\'etr\'es par $3$ param\`etres. L'\'equation transform\'ee $E'=\phi^*E$ par $\phi$ est une \'equation fuchsienne avec $6$ points singuliers non apparents normalis\'es projectivement \`a $x=0,1,t_1,t_2,t_3,\infty$ et $3$ points singuliers apparents normalis\'es projectivement \`a $x=q_1,q_2, q_3$. Les six singularit\'es non apparentes de $E'$ sont des points critiques non apparents de $\phi$, toutes sont situ\'ees au dessus de l'ensemble $\{0,1,\infty\}$. Tous les points de ramifications de rev\^etement $\phi$ se trouvent au dessus de $z=0,1,\infty$ sauf $3$ points de ramifications simples distincts libres $x=q_1,q_2, q_3$. Alors, d'apr\`es la formule de Hurwitz, l'application $\phi$ contient $d+5$ points distincts sur l'ensemble $\{0,1,\infty\}$. Ces trois points critiques libres $x=q_1(t_1,t_2,t_3)$, $x=q_2(t_1,t_2,t_3)$ et $x=q_3(t_1,t_2,t_3)$, des fonctions alg\'ebriques en $t_1,t_2,t_3$, sont des solutions alg\'ebriques du syst\`eme de Garnier d'ordre $3$. On veut \'etudier la classification de ces solutions alg\'ebriques du syst\`eme de Garnier de rang $3$ provenant du pull-back $\phi$ de l'\'equation fuchsienne $E$ de monodromies locales $(\frac{1}{p_0},\frac{1}{p_1},\frac{1}{p_{\infty}})$. On commence par $E$ \'equation fuchsienne hyperg\'eom\'etrique, i.e $\frac{1}{p_0}+\frac{1}{p_1}+\frac{1}{p_{\infty}}<1$ et le groupe de monodromie de l'\'equation fuchsienne $E'$ est irr\'eductible. On a obtenu des r\'esultats similaires \`a la proposition (\ref{ty}) :

\begin{prop}\label{types}
Soit $\phi:\Bbb P^1_x\to \Bbb P^1_z$ une application rev\^etement de $\Bbb P^1_z$ de degr\'e $d$, $E$ une \'equation hyperg\'eom\'etrique fix\'ee sur $\Bbb P^1_z$ avec monodromie $(\frac{1}{p_0}, \frac{1}{p_1},\frac{1}{p_{\infty}})$ tel que $$\frac{1}{p_0}+\frac{1}{p_1}+\frac{1}{p_{\infty}}<1.$$ Supposons que l'\'equation fuchsienne obtenue $E'=\phi^*E$ par le pull-back de l'\'equation $E$ a $6$ points singuliers non apparents normalis\'es \`a $0$, $1$, $t_1$, $t_2$, $t_3$ et $\infty$ d'ordres de monodromies locales autour de chacun de ces points sont respectivement $\tilde{p}_0$, $\tilde{p}_1$, $\tilde{p}_{t_1}$, $\tilde{p}_{t_2}$, $\tilde{p}_{t_3}$ et $\tilde{p}_{\infty}$ et $3$ points singuliers apparents. Alors  \begin{enumerate}
\item on a l'\'egalit\'e $d-(\lfloor \frac{d}{p_0}\rfloor+ \lfloor \frac{d}{p_1}\rfloor+\lfloor \frac{d}{p_{\infty}}\rfloor)=1$;
\item on obtient l'in\'egalit\'e $(1-\frac{1}{p_0}-\frac{1}{p_1}-\frac{1}{p_{\infty}})d\le 1-\frac{6}{p_{\infty}}$;
\item si $p_{\infty}>d$ on a $\frac{1}{d}+\frac{1}{p_0}+\frac{1}{p_1}\ge 1$;
\item si $p_{\infty}\le d$ on obtient $(1-\frac{1}{p_0}-\frac{1}{p_1})p^2_{\infty}-2p_{\infty}+6\le 0$ et $\frac{5}{6}\le \frac{1}{p_0}+\frac{1}{p_1}<1$.
\end{enumerate}
\end{prop}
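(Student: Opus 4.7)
Je suivrais la strat\'egie de la Proposition \ref{ty}, adapt\'ee au cas $N=3$ : $\tilde E$ a $\tilde n=6$ singularit\'es non apparentes, $\phi$ poss\`ede $N=3$ points de ramification simples libres hors de $\{0,1,\infty\}$, et quitte \`a renum\'eroter on suppose $p_0\le p_1\le p_\infty$. La formule de Riemann-Hurwitz donne d'embl\'ee $\#\phi^{-1}(\{0,1,\infty\})=3d-(2d-2-3)=d+5$. Parmi ces $d+5$ pr\'eimages, exactement $6$ sont non apparentes par hypoth\`ese, et les $d-1$ restantes sont apparentes.

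Pour l'\'egalit\'e (1), je majorerais le nombre $a_j$ d'apparentes au-dessus de $z=j$ par $\lfloor d/p_j\rfloor$ (leur indice est multiple de $p_j$), ce qui donne $d-1=\sum_j a_j\le\sum_j\lfloor d/p_j\rfloor$. Dans l'autre sens, l'hyperbolicit\'e de $E$ assure
$$d+5-\sum_j\lfloor d/p_j\rfloor\ge 5+d\left(1-\frac{1}{p_0}-\frac{1}{p_1}-\frac{1}{p_\infty}\right)>5,$$
donc ce membre, entier, est $\ge 6$, d'o\`u $\sum_j\lfloor d/p_j\rfloor\le d-1$. La combinaison des deux in\'egalit\'es donne l'\'egalit\'e (1).

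Pour l'in\'egalit\'e (2), l'observation cl\'e est que chacune des $a_j=\lfloor d/p_j\rfloor$ singularit\'es apparentes au-dessus de $z=j$ contribue pour au moins $p_j$ \`a la somme des indices (qui vaut $d$), donc les $b_j$ non apparentes au-dessus se partagent au plus $r_j:=d-p_j\lfloor d/p_j\rfloor=p_j\{d/p_j\}$ unit\'es de degr\'e ; comme chaque indice est $\ge 1$, on a $b_j\le r_j$, et en sommant $\sum_j r_j\ge\sum_j b_j=6$. Gr\^ace \`a $p_j\le p_\infty$, il vient
$$\sum_j\{d/p_j\}=\sum_j\frac{r_j}{p_j}\ge\frac{1}{p_\infty}\sum_j r_j\ge\frac{6}{p_\infty}.$$
L'\'egalit\'e (1), r\'ecrite $d(1-\sum_j 1/p_j)=1-\sum_j\{d/p_j\}$, entra\^ine alors l'in\'egalit\'e (2).

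Les points (3) et (4) en d\'ecoulent. Si $p_\infty>d$, alors $\lfloor d/p_\infty\rfloor=0$, (1) devient $\lfloor d/p_0\rfloor+\lfloor d/p_1\rfloor=d-1$, et $\lfloor d/p_j\rfloor\le d/p_j$ donne $\frac{1}{p_0}+\frac{1}{p_1}+\frac{1}{d}\ge 1$. Si $p_\infty\le d$, la fonction $d\mapsto d(1-\sum_j 1/p_j)$ \'etant positivement croissante par hyperbolicit\'e, je substituerais $p_\infty$ \`a $d$ dans (2), ce qui donne $(1-\frac{1}{p_0}-\frac{1}{p_1})p_\infty^2-2p_\infty+6\le 0$ ; la condition de discriminant positif $4-24(1-\frac{1}{p_0}-\frac{1}{p_1})\ge 0$ force $\frac{1}{p_0}+\frac{1}{p_1}\ge\frac{5}{6}$, tandis que la majoration $<1$ r\'esulte de l'hyperbolicit\'e. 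L'obstacle principal de toute la preuve est l'observation combinatoire \`a la base de (2) : c'est la rigidit\'e fournie par l'\'egalit\'e (1) qui permet la minoration $\sum_j\{d/p_j\}\ge 6/p_\infty$, sans laquelle (2) resterait hors de port\'ee.
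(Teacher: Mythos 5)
Votre d\'emonstration est correcte et, pour les points (1), (3) et (4), elle suit exactement la d\'emarche du texte, qui renvoie \`a la proposition \ref{ty} en rempla\c{c}ant $d+4$ par $d+5$ et $5$ par $6$. La seule vraie diff\'erence porte sur le point (2) : le texte l'obtient comme analogue de la proposition \ref{mod}, c'est-\`a-dire via la multiplicativit\'e de la caract\'eristique d'Euler orbifolde, $d\left(1-\frac{1}{p_0}-\frac{1}{p_1}-\frac{1}{p_\infty}\right)=-d\,\chi(E)\le-\chi(\tilde E)\le 1-\frac{6}{p_\infty}$, la majoration de $-\chi(\tilde E)$ venant de ce que les six ordres orbifoldes en haut divisent ceux d'en bas (donc sont $\le p_\infty$) et que les trois points doubles libres contribuent pour $-3$ ; cette in\'egalit\'e y est \'etablie ind\'ependamment de (1). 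Vous d\'eduisez au contraire (2) de (1) par un comptage d'indices fibre par fibre : exactement $d-1$ des $d+5$ points au-dessus de $\{0,1,\infty\}$ sont apparents, chaque point apparent au-dessus de $z=j$ ayant un indice multiple de $p_j$, de sorte que les six points non apparents se partagent au plus $\sum_j p_j\{d/p_j\}$ unit\'es de degr\'e, d'o\`u $\sum_j\{d/p_j\}\ge\frac{6}{p_\infty}$ puis $d(1-\sum_j\frac{1}{p_j})=1-\sum_j\{d/p_j\}\le 1-\frac{6}{p_\infty}$. Votre variante est plus \'el\'ementaire (aucune caract\'eristique d'Euler orbifolde n'est n\'ecessaire) et rend explicite le fait que (2) d\'ecoule de la rigidit\'e donn\'ee par (1), tandis que l'argument du texte fournit (2) sans supposer (1). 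Deux d\'etails \`a surveiller, qui ne sont pas des lacunes : lorsque $p_\infty=\infty$ (ou $p_1=\infty$), vos quantit\'es $p_j\{d/p_j\}$ doivent \^etre lues comme valant $d$, l'in\'egalit\'e (2) \'etant de toute fa\c{c}on imm\'ediate puisque $\frac{6}{p_\infty}=0$ ; et l'\'egalit\'e $\#\phi^{-1}(\{0,1,\infty\})=d+5$ utilise l'hypoth\`ese de la section selon laquelle la seule ramification hors de $\{0,1,\infty\}$ est form\'ee des trois points simples libres (sinon on n'obtient que la minoration $\ge d+5$, qui suffit d'ailleurs encore pour conclure).
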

\begin{proof}
La d\'emonstration est le m\^eme raisonnement que celle de proposition (\ref{ty}), mais pour le premier \'enonc\'e il faut savoir que le nombre total des points distincts dans les trois fibres de $\phi$ est \'egal \`a $d+5$ et pour le deuxi\`eme \'enonc\'e il faut aussi tenir compte que l'\'equation fuchsienne $E'$ poss\`ede $6$ points singuliers non apparentes (points singuliers essentiels). Les deux derniers \'enonc\'es sont les d\'eductions des deux premiers. 
\end{proof}
On constate, dans la proposition (\ref{types}), que tous les triplets hyperboliques $(p_0,p_1,p_{\infty})$ ne satisfont pas la condition du quatri\`eme \'enonc\'e de proposition. Cela veut dire que $d<p_{\infty}$ alors les points sur $\infty$ sont des points singuliers non apparents de l'\'equation fuchsienne $E'$ et les ordres de monodromie autour de ces points peuvent \^etre \'egaux \`a l'infini. On obtient la liste des triplets $(p_0,p_1,p_{\infty})$ et ses degr\'es $d$ satisfaisant la troisi\`eme condition : \begin{itemize}
\item $(2,3,p_{\infty})$ $d=2, 3, 4, 5, 6$;
\item $(2,4,p_{\infty})$ $d=2,3,4$;
\item $(2,p_1,p_{\infty})$ $d=2$;
\item $(3,3,p_{\infty})$ $d=2,3$.
\end{itemize}
Si on applique le premier \'enonc\'e de la proposition (\ref{types}) certains degr\'es $d$ de l'application $\phi$ disparaissent. On obtient \`a la fin la liste des triplets $(p_0,p_1,p_{\infty})$ et de degr\'es $d$ satisfaisant les \'enonc\'es de la proposition (\ref{types}) :

\begin{table}[htdp]
\begin{center}
\begin{tabular}{|c|c|}
\hline
Triplets $(p_0,p_1,p_{\infty})$ &Degr\'es $d$\\
\hline
$(2,3,p_{\infty})$ & $2, 3, 4,6$\\
\hline
 $(2,4,p_{\infty})$&$2,4$\\
 \hline
 $(2,p_1,p_{\infty})$ & $2$\\
 \hline
 $(3,3,p_{\infty})$ & $3$\\
 \hline
\end{tabular}
\end{center}
\caption{Triplets et Rev\^etements correspondants}
\label{gar3}
\end{table}
On cherche maintenant des rev\^etements qui transforment l'\'equation hyperg\'eom\'etrique $E$ \`a l'\'equation fuchsienne avec $6+3$ p\^oles simples. Ils seront choisi parmi les \'el\'ements pr\'eliminairement selectionn\'es dans le tableau (\ref{gar3}), on a eu le r\'esultat suivant.
\begin{theo}
Les solutions alg\'ebriques du syst\`eme de Garnier d'ordre $3$ avec groupe de monodromie irr\'eductible infini (Zariski dense) sont construites par la d\'eformation isomonodromique de l'\'equation fuchsienne obtenue $E'$ par le pull-back $\phi$ de l'\'equation hyperg\'eom\'etrique $E$. Alors il existe un et un seul rev\^etement $\phi$ \`a homographie pr\`es  de degr\'e $d$  et de type de ramification list\'e dans le tableau : 
\begin{table}[htdp]
\begin{center}
\begin{tabular}{|c|c|c|c|}
\hline
\multicolumn{2}{|c|}{\textbf{Monodromies}}&\multicolumn{1}{|c|} {Degr\'e de $\phi$} &\multicolumn{1}{|c|}{Type de rev\^etement}\\
\cline{1-2} 
\'Equation $E$&\'Equation $E'$ &$d$ &$(\cdots;\cdots;\cdots)$ \\ 
\hline
$(\frac{1}{2},\frac{1}{3},\theta)$&$(\theta,\theta,\theta,\theta,\theta,\theta)$&$6$&$(2+2+2; 3+3; \underbrace{1+\cdots+1}_{6 fois})$\\
\hline
\end{tabular}
\end{center}
\caption{Transformation de l'\'equation hyperg\'eom\'etrique \`a l'\'equation fuchsienne avec $6+3$ singuliers.}
\label{fig2}
\end{table}
\end{theo}
\begin{proof}
D'apr\`es la proposition (\ref{types}) pour tous les triplets $(p_0,p_1,p_{\infty})$ fix\'es sur $\Bbb P^1_z$, on a $d<p_{\infty}$. On a l'in\'egalit\'e $\frac{d-s_0}{p_0}+\frac{d-s_1}{p_1}+\frac{d-s_{\infty}}{p_{\infty}}\ge d-1$ due \`a la formule de Riemann-Hurwitz \cite{Belyi} o\`u $s_0$, $s_1$ et $s_{\infty}$ sont le nombre de points critiques non apparents sur $i=0$, $1$ et $\infty$ respectivement telle que $\sum_{i=0,1,\infty}s_i\ge 6$. Si on fixe l'\'equation fuchsienne sur $\Bbb P^1_z$ avec  triplet $(2,3,p_{\infty})$. Dans le tableau (\ref{gar3}) le degr\'e maximal correspondant \`a ce triplet est $6$ et on peut supposer que $p_{\infty}\ge 7$ alors l'in\'egalit\'e ci-dessus devient $$2\le d\le 6-15s_0-8s_1.$$ On a une seule possibilit\'e sur $s_0$ et sur $s_1$ pour trouver une solution de l'in\'egalit\'e
\begin{center}  $s_0=s_1=0$ et $s_{\infty}\ge 6 \Rightarrow d\le 6$.
 \end{center} 
Cela veut dire que tous les points critiques non-apparents sont sur $\infty$ et les points sur $0$ et sur $1$ sont tous critiques apparents alors le degr\'e $d$ doit \^etre multiple commun de $2$ et $3$. On a une seule valeur de $d\in [2,6]$, que $2$ et $3$ lui divisent, est $d=6$. On a une seule fa\c con de partition de $d$ au dessus de $0$ et de $1$ pour trouver le nombre maximal de points critiques apparents. On a sur $0$ trois points critiques apparents et sur $1$ deux points critiques apparents. Donc on obtient le type de ramification de $\phi$ est donn\'e par $(2+2+2; 3+3; 1+1+1+1+1+1)$.\\
Si on consid\`ere l'\'equation fuchsienne $E$ correspondante au triplet $(2,4,p_{\infty})$ avec $p_{\infty}\ge 5$; puisque dans le tableau (\ref{gar3}) le degr\'e maximal correspondant au triplet $(2,4,p_{\infty})$ est  $d=4$ ; on a l'in\'egalit\'e $$d\le -4-6s_0-s_1.$$ Alors $d$ est major\'e par un nombre entier n\'egatif contredit du fait que $2\le d<p_{\infty}$. Donc il n'existe pas de rev\^etement $\phi$ qui tire en arri\`ere l'\'equation fuchsienne hyperg\'eom\'etrique $E$, avec param\`etre $(\frac{1}{2},\frac{1}{4},\frac{1}{p_{\infty}})$, \`a l'\'equation fuchsienne $E'$.\\
Pour le triplet $(2,p,p)$, il est associ\'e \`a une seule application de degr\'e $2$. Alors quel que soit la r\'epartition de $2$, le nombre des points distincts dans les trois fibres ne vaut pas $7$.\\
Pour $(3,3,p_{\infty})$ o\`u $p_{\infty}\ge 4$ ; pour toutes les valeurs de $s_0$ et $s_1$ on a $d\le -6-s_0-s_1$ cela contredit aussi $2\le d<p_{\infty}$. 
\end{proof}

\section{Syst\`eme de Garnier d'ordre sup\'erieur ou \'egal $4$} On suppose que l'\'equation fuchsienne obtenue $E'$ apr\`es la transformation de $E$ par $\phi$ poss\`ede $n\ge 7$ points singuliers non apparents et $N\ge 4$ singularit\'es apparentes. Si on suppose de plus que l'\'equation fuchsienne $E$ est hyperg\'eom\'etrique,  le deuxi\`eme et le quatri\`eme
\'enonc\'es de la proposition (\ref{types}) modifient respectivement en\begin{enumerate}
 \item $(1-\sum_{i=0,1,\infty}\frac{1}{p_i})d\le 1-\frac{n}{p_{\infty}}$,
\item   $p_{\infty}\le d$ on a $(1-\frac{1}{p_0}-\frac{1}{p_1})p^2_{\infty}-2p_{\infty}+n\le 0$ et $\frac{n-1}{n}\le \frac{1}{p_0}+\frac{1}{p_1}<1$. 
\end{enumerate}
On remarque dans la section pr\'ec\'edente qu'\`a partir de $n=6$ le degr\'e de rev\^etements existant est strictement inf\'erieur \`a $p_{\infty}$. Les \'el\'ements list\'es dans le tableau (\ref{gar3}) restent les m\^emes, aucun rev\^etement ne peut transformer l'\'equation $E$ en \'equation fuchsienne $E'$. 
\begin{theo}
 Si $n\ge 7$ il n'existe pas une solution alg\'ebrique compl\`ete du syst\`eme de Garnier, avec groupe de monodromies irr\'eductibles, obtenue par la m\'ethode de Kitaev.
\end{theo}
\begin{proof}
D'apr\`es la formule de Riemann-Hurwitz, on a $\frac{d-s_0}{p_0}+\frac{d-s_1}{p_1}+\frac{d-s_{\infty}}{p_{\infty}}\ge d-1$, o\`u $s_0$, $s_1$ et $s_{\infty}$ sont respectivement le nombre de points critiques non apparents sur $0$, $1$ et $\infty$ tel que $\sum_{i=0,1,\infty}s_i=n$. Si on fixe le triplet $(2,3,p)$, on obtient l'in\'egalit\'e ci-dessus sous la forme \begin{equation}\label{ineg}
d\le 6(7-n)-15s_0-8s_1.
\end{equation} 
Si $3\le n\le 6$ on obtient des solutions alg\'ebriques aux d\'eformations de deux \'equations fuchsiennes d\'ej\`a connues et aux deux autres \'equations que nous avons fait en haut. Si $n\ge 7$, le degr\'e $d$ du rev\^etement $\phi$ dans l'in\'egalit\'e (\ref{ineg}) est major\'ee par des entiers naturels n\'egatifs, alors $\phi$ n'existe pas. Cela entra\^ine la non existence de l'\'equation fuchsienne $E'$.\\ On voit qu'on ne peut plus trouver un rev\^etement $\phi$ d\`es que $n> 6$ et on peut tester les autres types :\begin{description}
\item[triplet $(2,4,p)$]  on a l'in\'egalit\'e $d\le 4(5-n)-6s_0-s_1$ si $n\ge 6$;
\item[ le triplet $(3,3,p)$] on obtient $d\le 3(4-n)-s_0-s_1$ si $n\ge 5$.
\end{description}
\end{proof}
\section{Exemple : Rev\^etement degr\'e $4$}
 On veut calculer explicitement le rev\^etement $\phi:\mathbb P^1_x\to\mathbb P^1_z$ de degr\'e $4$ du tableau (\ref{H}).  \`A homographie pr\`es en $z$, le type de ramification recherché est $(2+2; 1+1+1+1;3+1)$ : 
 \begin{itemize}
 \item la fibre $\phi^{-1}(0)$ est totalement ramifi\'ee \`a l'ordre $2$,
 \item la fibre $\phi^{-1}(1)$ est non ramifi\'ee, constitu\'ee des points $x=0,1,t_1,t_2$,
 \item la fibre $\phi^{-1}(\infty)$ a  un point simple $x=\infty$ et un point triple,
 \item $\phi$ a deux autres points critiques $x=q_1,q_2$.
 \end{itemize}
Ces contraintes nous imposent que l'application $\phi$ est de la forme 
$$\phi(x) = -\frac{c^3}{a_0^2}\frac{(x^2 + a_1x + a_0)^2}{(x-c)^3}$$
o\`u les constantes $a_0,a_1,c$ satisfont ($\phi(1)=1$)
$$\frac{a_0^2}{c^3}+\frac{(1 + a_1 + a_0)^2}{(1-c)^3}.$$
Cette \'equation d\'efinit une surface rationnelle $S$ que l'on peut param\'etrer par $(a_0,s)$ :
$$a_1 = a_0(s^3-1)-1\ \ \ \text{et}\ \ \ c =\frac{1}{1-s^2}.$$
Les deux points $x=t_1,t_2$ sont alors les racines de l'\'equation polyn\^omiale
$$t^2 - (a_0^2(s^2-1)^3-2 a_0(s^3-1)+1)t +a_0(2-a_0(2s^3-3s^2+1))=0$$
qui d\'efinit une nouvelle surface $S_t$ : la projection
$S_t\to S;(t,a_0,s)\mapsto (a_0,s)$ est un rev\^etement double ramifiant le long du discriminant du polyn\^ome pr\'ec\'edent.
C'est encore une surface rationnelle que l'on peut param\'etrer par $(s,t)$ en posant
$$a_0=\frac{t^2-1}{(s+1)((s-1)^2t^2-(s+1)^2)}.$$
On obtient alors les solutions $x=t_1,t_2$ :
$$t_1 =\frac{ -(t+1)(st-t-1-3s)}{(st-t-1-s)^2(s+1)}\ \ \ \text{et}\ \ \ 
t_2 = -\frac{(-1+t)(st-t+1+3s)}{(st-t+s+1)^2(s+1)}.$$
On v\'erifie sans peine que l'application 
$$\Pi_t:S_t\to\mathbb P^1\times\mathbb P^1\ ;\ (s,t)\mapsto(t_1,t_2)$$
est de degr\'e $8$, ramifiant pr\'ecis\'ement au dessus de 
$$t_1=0,1,\infty, \ \ \ t_2=0,1,\infty \ \ \ \text{et la diagonale}\ \ \ t_1=t_2.$$
C'est la propri\'et\'e de Painlev\'e. 
Il nous reste \`a d\'eterminer les deux points critiques libres $x=q_1,q_2$. Ils sont solutions 
de l'\'equation polyn\^omiale 
$$x^2-\frac{t^2s^3+3s^3-4t^2s^2+4s^2+5t^2s+7s+2-2t^2}{(s-1)(s+1)(st-t+s+1)(st-t-1-s)}x$$
$$-\frac{(st-t+1+3s)(st-t-1-3s)}{(s+1)^2(st-t+s+1)(st-t-1-s)(s-1)}=0$$
dont  le discriminant est donn\'e par $s^2(s+1)^2F(s,t)$ avec 
$$F(s,t)=s^4t^4+6s^4t^2+9s^4-4t^4s^3-56t^2s^3+60s^3+6t^4s^2+100t^2s^2$$
$$+118s^2-4t^4s-56t^2s+60s+t^4+6t^2+9.$$ 
Pour d\'eterminer les racines $x=q_1,q_2$, il nous faut passer de nouveau \`a un rev\^etement double $S_q\to S_t$
ramifiant le long de la courbe alg\'ebrique $F(s,t)=0$ (afin que le discriminant devienne un carr\'e).
Cette courbe est irr\'eductible sur $\Bbb Q$ mais r\'eductible sur $\Bbb Q(\alpha)$, avec $\alpha^2+3=0$. On obtient que la courbe $F(s,t)=0$ est la r\'eunion de deux coniques $F_1$ et $F_2$ qui s'intersectent en quatre points o\`u \begin{eqnarray*}
F_1(s,t)&=&t^2+2t\alpha-3-2t^2s-4st\alpha-10s+t^2s^2+2s^2t\alpha-3s^2\\
F_2(s,t)&=&t^2s^2-2s^2t\alpha-3s^2-2t^2s+4st\alpha-10s+t^2-2t\alpha-3. 
\end{eqnarray*} 
En param\'etrant le pinceau de coniques associ\'es, on obtient une param\'etrisation rationnelle $(u,v)$ pour $S_q$ en posant
$$s=-\frac{4u(v'u+2)}{u^2-3-uv'},\ \ \  t=-\frac{3u^2+v'u-1}{u^2-3-v'u}\ \ \ \text{avec}\ \ \ \frac{-2\alpha+v'}{2\alpha+v'}=v^2.$$
On obtient enfin la param\'etrisation compl\`ete de la solution alg\'ebrique (rationnelle sur $\Bbb Q(\alpha)$) :
\begin{eqnarray*}
t_1(u,v)& = &-{1\over52}(353+9\alpha)(-4\alpha u^2+2uv^2-2u-v^2+1+13u^2v^2+11u^2+4\alpha uv^2 \\
& & -4\alpha u-2\alpha v^2+2\alpha)u(2v-1+\alpha)(2v+1-\alpha)(uv+u+\alpha v-\alpha)^2(uv-u\\
& &+\alpha v+\alpha)^2/((u+1)(u^2v^2-u^2-2\alpha uv^2-2\alpha u+v^2-1)(v+1)(v-1)(\alpha v\\
& & -2v+\alpha-2+7uv+u-4\alpha u)^2(\alpha v-2v+2-\alpha+7uv-u+4\alpha u)^2);\\
t_2(u,v)&=&{1\over52}(9\alpha-353)(-v^2+1+13u^2v^2+11u^2+4\alpha u^2-2uv^2+2u+2\alpha v^2\\
& &-2\alpha+4\alpha uv^2-4\alpha u)u(2v-1-\alpha)(2v+1+\alpha)(uv+u+\alpha v-\alpha)^2(uv\\
& &-u+\alpha v+\alpha)^2/((u-1)(u^2v^2-u^2-2\alpha uv^2-2\alpha u+v^2-1)(v+1)(v\\
& &-1)(2v+\alpha v-2-\alpha+7uv-u-4\alpha u)^2(2v+\alpha v+2+\alpha\\
& &+7uv+u+4\alpha u)^2);\\
q_1(u,v) &=& -{7\over 2}(uv-u+\alpha v+\alpha)u(2v+1-\alpha)(2v+1+\alpha)(uv+u+\alpha v\\
& & -\alpha)^2/((u^2v^2-u^2-2\alpha uv^2-2\alpha u+v^2-1)(\alpha v-2v+2-\alpha+7uv-u\\
& &+4\alpha u)(2v+\alpha v-2-\alpha+7uv-u-4\alpha u)(v+1));\\
q_2(u,v)& =& -{7\over 2}(uv+u+\alpha v-\alpha)u(2v-1+\alpha)(2v-1-\alpha)(uv-u+\alpha v\\
& &+\alpha)^2/((u^2v^2-u^2-2\alpha uv^2-2\alpha u+v^2-1)(2v+\alpha v+2+\alpha+7uv+u\\
& &+4\alpha u)(\alpha v-2v +\alpha-2+7uv+u-4\alpha u)(v-1)).
\end{eqnarray*}}

{\small

}

\end{document}